\documentclass{amsart}

\usepackage{hyperref}
\usepackage{amsmath}

\usepackage{amssymb}
\usepackage{mathrsfs}
\usepackage{amscd}
\usepackage{graphicx}
\usepackage{mathdots}
\usepackage{gensymb}
\usepackage{epstopdf}
 \usepackage{todonotes}
 \usepackage{youngtab}
 \usepackage{wrapfig}
 \usepackage{bbm}
 \usepackage{bm}
 \usepackage{verbatim}
 \usepackage[margin=1.1in]{geometry}

 \usepackage[nocompress]{cite}

\usepackage{epsfig}       
\usepackage{epic,eepic}        

\newtheorem{thm}{Theorem}[section]
\newtheorem{prop}[thm]{Proposition}

\newtheorem{cor}[thm]{Corollary}

\theoremstyle{definition}

\theoremstyle{remark}
\newtheorem{remark}[thm]{Remark}

\numberwithin{equation}{section}

\newcommand{\Z}{\mathbb{Z}}
\newcommand{\Y}{\mathbb{Y}}

\newcommand{\C}{\mathbb{C}}
\newcommand{\R}{\mathbb{R}}

\begin{document}

\title[Majorization Inequalities from Logarithmic Convexity]{Majorization Inequalities from Logarithmic Convexity}
 
 \author{Colin McSwiggen}
\address{Institute of Mathematics, Academia Sinica}
\email{csm@as.edu.tw}

\author{Siddhartha Sahi}
\address{Department of Mathematics, Rutgers University}
\email{sahi@math.rutgers.edu}

\begin{abstract}
Majorization inequalities for symmetric polynomials have interested mathematicians for centuries, from the AM--GM inequality for two variables going back at least to Baudh{\={a}}yana's \emph{\'{S}ulbas{\={u}}tra} and Euclid's \emph{Elements} in the first millenium BCE, through classical results of Newton, Muirhead and Gantmacher, to more recent extensions to Schur polynomials and zonal spherical functions. These have been established case by case, with no unified approach. Although it is known that majorization inequalities follow from symmetry and convexity in the indexing partition, the difficulty of proving convexity in specific cases has left a number of outstanding conjectures inaccessible until now.

The key insight of this paper is that log-convexity provides a more versatile tool \emph{and} a unifying principle. It implies convexity and hence majorization, and it is preserved under multiplication and weighted averaging, making it well suited to inductive arguments in a wide range of settings. Using this idea, we prove new majorization inequalities for Macdonald polynomials, Jack polynomials and Heckman--Opdam hypergeometric functions, unifying existing results and resolving several open conjectures.
\end{abstract}

\maketitle

\tableofcontents

\section{Introduction}

Any nonnegative real numbers $x_1, \hdots, x_n$ satisfy
\begin{equation} \label{eqn:amgm}
\frac{1}{n}\big(x_1 + \cdots + x_n \big) \ge \big( x_1 \cdots x_n \big)^{1/n}.
\end{equation}
This is the inequality of arithmetic and geometric means, or \emph{AM--GM inequality}, one of the most ancient and iconic estimates in mathematics.  In the case $n=2$, it has been known since at least the mid-first millenium BCE; a geometric proof is implicit in the \emph{Baudh{\={a}}yana \'{S}ulbas{\={u}}tra} \cite[B\'{S}S 2.5]{SenBag1983} (c. 800--500 BCE), which reads:
\begin{center}
\includegraphics[width=290px]{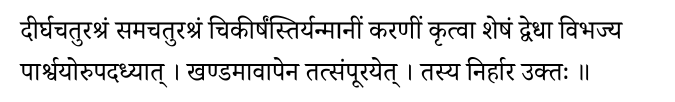}
\end{center}
This text, which forms part of a set of instructions for building altars, describes a method for constructing a square of equal area to a given rectangle. In the process, one obtains a square of equal perimeter to the original rectangle but greater area, implying that a square has maximal area among rectangles of fixed perimeter.   It is a quick and entertaining exercise to show that this is precisely the $n=2$ case of (\ref{eqn:amgm}).  Around 300 BCE, Euclid gave a similar construction with an explicitly stated inequality \cite[Propositions II.5 and VI.27]{euclid_heath_1956}, as well as a different construction that implies an inequivalent geometric proof \cite[Propositions VI.8 (Corollary) and VI.13]{euclid_heath_1956}. 

The AM--GM inequality is the oldest member of a very large family: it is the most basic example of a class of estimates known as \emph{majorization inequalities}, which have been a focus of much recent research on symmetric polynomials and special functions. This paper introduces a new technique for proving such inequalities, leading to new theorems on Macdonald polynomials, Jack polynomials and Heckman--Opdam hypergeometric functions that generalize many of the existing results in the literature and place them in a unified framework.

The study of inequalities between symmetric polynomials and related special functions often relies on integral representations that expose some underlying geometry or symmetry of the indexing parameters. Historically, many comparison results were derived using Harish-Chandra’s integral formula for spherical functions \cite{HC2}.  From the integral formula one can prove that the functions in question are convex and symmetric in the parameters, which implies that majorization inequalities must hold.  While the Harish-Chandra formula provides a powerful link between representation theory and analysis, it is inherently limited to a discrete set of cases where the polynomials or special functions have a specific group-theoretic or geometric interpretation in terms of spherical functions on a symmetric space. Consequently, this technique is fundamentally unavailable for the study of general Jack and Macdonald polynomials, which are defined over a continuous range of parameters where classical symmetric-space interpretations do not exist.

This paper offers a way to circumvent these limitations. The key insight is that the natural property to consider is not convexity in the parameters but rather the stronger yet more flexible property of \emph{log}-convexity, which turns out to hold in all cases in the literature where convexity was shown previously.  Because a product of log-convex functions is log-convex---which is not true of functions that are merely convex---this technique enables more sophisticated arguments in which symmetric functions in higher rank are constructed from those in lower rank, via formulas such as the interlacing integral representation of Macdonald polynomials developed by Okounkov and Olshanski.  Moreover, despite being a stronger property, log-convexity is sometimes easy to prove even in cases where convexity is difficult to show directly.  In addition to new results, this approach offers a systematic explanation and unified derivation for many different majorization inequalities that had previously been observed in symmetric function theory.

\subsection{Background}

Around the turn of the 20th century, Muirhead \cite{Muirhead} proved a substantial generalization of the AM--GM inequality.  Muirhead's inequality gives a comparison between pairs of monomial symmetric polynomials, which is governed by the majorization partial order on partitions.  For our purposes, a partition with $n$ parts is a vector $\lambda = (\lambda_1 \ge \cdots \ge \lambda_n \ge 0) \in \Z^n$ with weakly decreasing, nonnegative integer coordinates.  Given two partitions $\lambda, \mu \in \Z^n$, we say that $\lambda$ \emph{majorizes} $\mu$ if 
\begin{equation} \label{eqn:maj-eq}
\sum_{i=1}^n \lambda_i = \sum_{i=1}^n \mu_i
\end{equation}
and
\begin{equation} \label{eqn:maj-ineq}
\sum_{i=1}^r \lambda_i \ge \sum_{i=1}^r \mu_i, \qquad 1 \le r < n.
\end{equation}
For any partition $\lambda$, the corresponding \emph{monomial symmetric polynomial} is
\[
m_\lambda(x) = \sum_{\eta \in S_n \cdot \lambda} x^{\eta}
\]
where $S_n$ is the symmetric group on $n$ letters, $S_n \cdot \lambda$ is the $S_n$-orbit of $\lambda$, $x = (x_1, \hdots, x_n)$ is an $n$-tuple of formal variables, and $x^{\eta} = x_1^{\eta_1} \cdots x_n^{\eta_n}$.  Write $\bm{1}$ for the vector $(1, \hdots, 1) \in \R^n$. Muirhead's inequality\footnote{Muirhead's original formulation of the inequality allows $\lambda$ and $\mu$ to be real vectors rather than partitions, but this more general statement is easily deduced by approximation from (\ref{eqn:muirhead}).} states that
\begin{equation} \label{eqn:muirhead}
\frac{m_\lambda(x)}{m_\lambda(\bm{1})} \ge \frac{m_\mu(x)}{m_\mu(\bm{1})} \qquad \text{for all } x \in [0,\infty)^n \text{ if and only if $\lambda$ majorizes $\mu$.}
\end{equation}
Taking $\lambda = (n,0,\hdots,0)$, $\mu = \bm{1}$ in (\ref{eqn:muirhead}) and substituting $x_i = y_i^{1/n}$ recovers the AM--GM inequality.

Over the past several centuries, mathematicians have proved many other inequalities between symmetric polynomials.  Important examples include Newton's \cite[p.~173]{Newton1732} and Maclaurin's \cite{Maclaurin1729} inequalities for elementary symmetric polynomials, Schur's inequality for complete homogeneous symmetric polynomials \cite[p.~164]{HardyLittlewoodPolya1934}, and Gantmacher's inequality for power sums \cite[p.~203]{Gantmacher1959}. Yet even in this classical area, a major breakthrough arrived unexpectedly in 2011 with a remarkable set of results by Cuttler, Greene and Skandera \cite{CGS}.  They showed that, just like the AM--GM inequality, many other classical comparison results for symmetric polynomials are actually special cases of more general theorems that take the same form as Muirhead's inequality.  That is, for many of the classical families of symmetric polynomials that are indexed by partitions, there are pointwise inequalities between pairs of such polynomials that hold if and only if one of the partitions majorizes the other.

The majorization order thus appears as a unified organizing principle that controls pointwise inequalities between symmetric polynomials in a large number of cases.  Recent works by Sra \cite{Sra1}, Ait-Haddou and Mazure \cite{AitHaddouMazure}, McSwiggen and Novak \cite{MN-majorization}, Chen and Sahi \cite{ChenSahi2024}, Chen, Khare and Sahi \cite{ChenKhareSahi}, Xu and Yao \cite{XuYao2026}, and Khare and Tao \cite{KhareTao2021} have expanded this principle to include many further inequalities for symmetric polynomials and other special functions, and have also identified some intriguing cases in which majorization inequalities do \emph{not} hold.  Yet no satisfactory explanation has emerged for why the majorization order plays such a ubiquitous role, and proofs have proceeded largely on a case-by-case basis.

This paper offers a unifying explanation for a large class of these results: they arise from the fact that the polynomials in question are log-convex functions of the partition $\lambda$, at particular specializations $x \in \R^n$. For polynomials satisfying an appropriate symmetry condition, this log-convexity is sufficient to imply Muirhead-type inequalities.  Moreover, log-convexity is often easier to establish directly than the inequalities themselves, despite the fact that it is actually a stronger property.  Using this idea, we prove new majorization inequalities for Macdonald polynomials, Jack polynomials, and Heckman--Opdam hypergeometric functions, resolving several open conjectures and unifying a number of known results as special or limiting cases of more general statements.

\subsection{An example}

To illustrate the technique, we give a short alternate proof of a result on power sums due to Cuttler, Greene and Skandera \cite{CGS}, which generalizes the above-mentioned inequality of Gantmacher.  We first review some definitions.  Given an integer $k \ge 0$, define
\[
p_k(x) = \sum_{i=1}^n x_i^k.
\]
For any partition $\lambda$, the corresponding \emph{power sum symmetric polynomial} is then
\[
p_\lambda(x) = \prod_{i=1}^n p_{\lambda_i}(x).
\]
Let $X$ be a convex subset of $\R^n$. A function $f: X \to [0,\infty)$ is \emph{log-convex} if $\log \circ f$ is convex, with the convention $\log 0 = -\infty$.  Equivalently, for all $x, y \in X$ and all $t \in (0,1)$, we have
\[
f \big( tx + (1-t)y \big) \le f(x)^t f(y)^{1-t}.
\]
Write
\[
\mathbb{Y}_n = \big\{ \lambda \in \Z^n \ : \ \lambda_1 \ge \hdots \ge \lambda_n \ge 0 \big\}
\]
for the set of partitions with $n$ parts.  We say that a function $f : \mathbb{Y}_n \to [0,\infty)$ is log-convex if $f$ extends to a log-convex function on a convex set containing $\mathbb{Y}_n$.

The majorization order on partitions extends to a partial order on vectors as follows.  For any $x \in \R^n$, write $x^+$ for the vector of coordinates of $x$ sorted in weakly decreasing order, that is, $\{x_1^+, \hdots, x_n^+\} = \{x_1, \hdots, x_n\}$ and $x_1^+ \ge \hdots \ge x_n^+$.  For two vectors $x, y \in \R^n$, we say that $x$ \emph{majorizes} $y$ if $x^+$ and $y^+$ satisfy the same equation (\ref{eqn:maj-eq}) and inequalities (\ref{eqn:maj-ineq}) that define the majorization order on partitions, that is, if
\[
\sum_{i=1}^n x_i = \sum_{i=1}^n y_i
\]
and
\[
\sum_{i=1}^r x^+_i \ge \sum_{i=1}^r y^+_i, \qquad 1 \le r < n.
\]
There are many other equivalent definitions of the majorization order on vectors; we refer the reader to the book by Marshall, Olkin and Arnold \cite{MO}, which is the canonical reference on the theory of majorization and related inequalities.  Finally, a function $f$ from $\R^n$ or $\mathbb{Y}_n$ to $\R$ is \emph{Schur-convex} if $f(x) \ge f(y)$ whenever $x$ majorizes $y$.

To demonstrate how majorization inequalities follow from log-convexity, we now give a very brief proof of the following Muirhead-type inequality for power sums, which comprises part of \cite[Theorem 4.2]{CGS}:

\begin{thm}[Cuttler--Greene--Skandera]
    For partitions $\lambda, \mu \in \mathbb{Y}_n$, if $\lambda$ majorizes $\mu$ then $p_\lambda(x) \ge p_\mu(x)$ for all $x \in [0,\infty)^n$.
\end{thm}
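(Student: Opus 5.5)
Fix $x \in [0,\infty)^n$. The plan is to show that $\lambda \mapsto p_\lambda(x)$ extends to a log-convex, permutation-symmetric function on all of $\R_{\ge 0}^n$, and then deduce Schur-convexity from symmetry together with convexity. For real $s \ge 0$ define $g(s) = \sum_{i=1}^n x_i^s$, with the conventions $0^s = 0$ for $s>0$ and $0^0 = 1$. This convention makes $g(k) = p_k(x)$ for every integer $k \ge 0$. Since $p_0(x) = n$ regardless of $x$, the convention $0^0=1$ is exactly what the definition of $p_\lambda$ requires.

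\textbf{Step 1: log-convexity in one variable.} Consider first the indices $i$ with $x_i > 0$. Each $s \mapsto x_i^s = e^{s\log x_i}$ is log-convex, and a sum of log-convex functions is log-convex; this is the standard consequence of H\"older's inequality, $\sum a_i^t b_i^{1-t} \le (\sum a_i)^t(\sum b_i)^{1-t}$. The zero coordinates are the one delicate point. Each contributes $1$ at $s=0$ and $0$ for $s>0$, so it adds a nonnegative constant $c$ at $s=0$ only. On the interval $(0,\infty)$ the function $g$ is a sum of exponentials and hence log-convex. Raising the value of $g$ at the endpoint $s=0$ cannot break log-convexity. To see this, take $s=0<u$ and $t\in(0,1)$. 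The point $(1-t)u$ lies in the interior, so $g((1-t)u) \le g_+(0)^t g_+(u)^{1-t} \le g(0)^t g(u)^{1-t}$, where $g_+$ denotes the sum over the positive coordinates. Hence $g$ is log-convex on $[0,\infty)$. I expect this boundary convention to be the only real obstacle, and I would settle it with exactly this argument. An alternative would be to replace $x$ by $x+\epsilon\bm{1}$ and let $\epsilon\to0$, since the desired inequality is closed under limits.

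\textbf{Step 2: extension to $\lambda$.} Define $F(y) = \prod_{i=1}^n g(y_i)$ for $y \in [0,\infty)^n$. Then $\log F(y) = \sum_i \log g(y_i)$ is a sum of convex functions, each depending on a single coordinate, so $F$ is log-convex on the convex set $[0,\infty)^n$. Moreover $F(\lambda) = p_\lambda(x)$ for every $\lambda\in\Y_n$. The function $F$ is also invariant under permuting the coordinates of $y$.

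\textbf{Step 3: majorization.} Suppose $\lambda$ majorizes $\mu$. The classical Hardy--Littlewood--P\'olya/Rado characterization, found in Marshall--Olkin--Arnold \cite{MO}, says that $\mu$ is then a convex combination $\mu = \sum_\sigma t_\sigma\, \sigma\lambda$ of permutations of $\lambda$. All the points $\sigma\lambda$ lie in $[0,\infty)^n$. A log-convex function is convex, since $\exp$ is convex and increasing. Convexity and symmetry of $F$ then give
\[
F(\mu) \le \sum_\sigma t_\sigma F(\sigma\lambda) = F(\lambda).
\]
This is exactly the inequality $p_\mu(x) \le p_\lambda(x)$. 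Since $x\in[0,\infty)^n$ was arbitrary, this proves the theorem.
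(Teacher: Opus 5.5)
Your proof is correct and is essentially the paper's argument: log-convexity of $\lambda \mapsto p_\lambda(x)$ comes from the exponentials $s \mapsto x_i^s$ together with closure under sums and products, and combined with symmetry in $\lambda$ it yields Schur-convexity. The differences are minor: you handle zero coordinates directly via the convention $0^0=1$ and your boundary argument, whereas the paper takes $x \in (0,\infty)^n$ and extends by continuity (the simpler route you also mention), and you derive Schur-convexity from symmetry and convexity via Rado's convex-hull characterization rather than citing \cite[Proposition C.2]{MO}.
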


The converse statement, that $p_\lambda(x) \ge p_\mu(x)$ for all $x \in [0,\infty)^n$ implies $\lambda$ majorizes $\mu$, is usually considered the ``easy direction'' for this type of result and is also shown in \cite[Theorem 4.2]{CGS}. The log-convexity technique demonstrated here applies to the ``hard direction'' of implication, that is, showing that majorization of partitions implies inequalities for the polynomials.  Although this technique yields an extremely straightforward proof in the case of power sums, for other families of polynomials the desired log-convexity and symmetry properties are less obvious, and additional ideas are required to show that they hold.

\begin{proof}
    We consider $x \in (0,\infty)^n$; the statement for $x \in [0,\infty)^n$ then follows by continuity.  The claim can be rephrased as follows: given any $x \in (0,\infty)^n$, the function $\lambda \mapsto p_\lambda(x)$ is Schur-convex.  We first observe that this function is log-convex.  Indeed, for any $1 \le i,j \le n$, the function $\lambda \mapsto x_i^{\lambda_j}$ extends to a log-linear, and thus log-convex, function on $\R^n$. Since the set of log-convex functions on a given domain is closed under products and positive linear combinations, the functions
    \[
    \lambda \mapsto p_{\lambda_j}(x) = \sum_{i=1}^n x_i^{\lambda_j} \qquad \text{and} \qquad \lambda \mapsto p_{\lambda}(x) = \prod_{j=1}^n p_{\lambda_j}(x) 
    \]
    are also log-convex.  Moreover, $p_\lambda(x)$ is manifestly symmetric in the entries of $\lambda$. Since log-convexity is stronger than convexity, and a symmetric, convex function on $\R^n$ is Schur-convex \cite[Proposition C.2, p.~97]{MO}, we have shown the claim.
\end{proof}

The crux of the proof above is that $\lambda \mapsto p_\lambda(x)$ extends to a symmetric, convex function on $\R^n$, and such a function is automatically Schur-convex.  Technically, for the purpose of showing majorization inequalities, it is therefore adequate to prove convexity in the parameter $\lambda$ rather than log-convexity (\emph{after} extending to $\R^n$, as a convex function on $\Y_n$ does not necessarily extend symmetrically to a convex function on the entire space). Indeed, the technique of using convexity and symmetry to show Schur-convexity was used previously by Sra \cite{Sra1} to prove a Muirhead-type inequality for Schur polynomials and by McSwiggen and Novak \cite{MN-majorization} to prove inequalities for spherical functions on symmetric spaces.  The above argument already illustrates two good reasons for considering log-convexity instead.  First, it is a stronger property that holds naturally at nonnegative specializations for many families of polynomials indexed by partitions.  This is especially transparent in cases like the monomial and power sum symmetric polynomials, where the entries of the partition appear as exponents of the variables $x_1, \hdots, x_n$.  Second, log-convex functions enjoy helpful properties that do not always hold for convex functions, notably closure under products. As a result, it is sometimes easy to show the stronger property of log-convexity even in cases where it would be difficult to verify convexity directly.  This is already the case for power sums, and it plays a key role in the proofs below, where the inductive arguments rely crucially on the fact that products of log-convex functions are log-convex.

\subsection{Overview}

The remainder of the paper consists of the statements and proofs of our main results.  The first of these, Theorem \ref{thm:conv-mac}, shows that when appropriately normalized and evaluated at points along certain rays in $\R^n$, Macdonald polynomials are log-convex and Schur-convex functions of the partition $\lambda$.  All other results in this paper are in some sense downstream of this theorem, which can be thought of as a far-reaching generalization of the AM--GM inequality.  Next, in Corollary \ref{cor:conv-jack}, we show an analogous statement for Jack polynomials evaluated at any $x \in [0,\infty)^n$.  As a consequence, we obtain Theorem \ref{thm:poly-maj}, which gives Muirhead-type inequalities for both Macdonald and Jack polynomials.  The inequalities for Jack polynomials in particular were separately conjectured by Chen and Sahi \cite{ChenSahi2024} and by McSwiggen and Novak \cite{MN-majorization}. Since Jack polynomials include monomial symmetric polynomials, elementary symmetric polynomials, Schur polynomials, and real and quaternionic zonal polynomials as special cases, this theorem provides a common generalization unifying Muirhead's inequality with Newton's inequalities for elementary symmetric polynomials, together with inequalities for each of these other families of polynomials that were previously proved by Cuttler, Greene and Skandera \cite{CGS}, Sra \cite{Sra1}, and McSwiggen and Novak \cite{MN-majorization}.  As a further application, we obtain Theorem \ref{thm:poly-weak-maj}, which gives inequalities for Jack polynomials that characterize the \emph{weak} majorization order on partitions.  This result was conjectured by Chen and Sahi \cite{ChenSahi2024} and generalizes a theorem on Schur polynomials due to Khare and Tao \cite{KhareTao2021}.  Our final two results concern Heckman--Opdam hypergeometric functions, which play a prominent role in representation theory and integrable systems and are closely related to Jack polynomials.  Theorem \ref{thm:conv-HGF} shows that for all multiplicity parameters $k \ge 0$ and all arguments $x \in \R^n$, the type-$A$ Heckman--Opdam hypergeometric function $F_{k,s}(x)$ is a log-convex and Schur-convex function of the spectral parameter $s \in \R^n$.  Corollary \ref{cor:HGF-maj} then gives Muirhead-type inequalities for $F_{k,s}(x)$, resolving the type-$A$ case of a conjecture by McSwiggen and Novak \cite{MN-majorization}.

\section{Main results}

\subsection{Macdonald and Jack polynomials}

For $q,t \in (0,1)$ and a partition $\lambda \in \mathbb{Y}_n$, let $\Omega_\lambda(x; q,t)$ be the Macdonald polynomial normalized to 1 at the principal specialization $t^\delta = (t^{n-1}, t^{n-2}, \hdots, 1)$, that is,
\[
\Omega_\lambda(x; q,t) = \frac{P_\lambda(x; q,t)}{P_\lambda(t^\delta ; q,t)},
\]
where $P_\lambda$ is the monic normalization of the Macdonald polynomial as defined in \cite[Chapter VI.4]{Macdonald}.

For $a > 0$ and $q,t \in (0,1)$, define the scaled, $t$-shifted, dominant $q$-lattice
        \[
        \mathcal{L}_n = \mathcal{L}^{q,t,a}_n = \big \{ (a q^{-\mu_1} t^{n-1}, a q^{-\mu_2} t^{n-2}, \hdots, a q^{-\mu_n}) \ : \ (\mu_1 \ge \hdots \ge \mu_n) \in \Z^n \big \} \subset \R^n.
        \]

\begin{thm} \label{thm:conv-mac}
    For all $q,t \in (0,1)$, $a > 0$, and $x \in \mathcal{L}^{q,t,a}_n$, the function $\lambda \mapsto \Omega_\lambda(x; q, t)$ is log-convex and Schur-convex.
\end{thm}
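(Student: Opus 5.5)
The plan is to realize $\lambda \mapsto \Omega_\lambda(x;q,t)$, for fixed $x \in \mathcal{L}^{q,t,a}_n$, as the restriction to $\Y_n$ of an explicit function of a real vector $s \in \R^n$. This function should be visibly log-convex, and it should be symmetric after a $\delta$-shift. Schur-convexity then follows from \cite[Proposition C.2]{MO}, exactly as in the power-sum example.

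First I reduce to $a=1$. By homogeneity, $\Omega_\lambda(ax) = a^{|\lambda|}\Omega_\lambda(x)$. The factor $a^{|\lambda|}$ is log-linear in $\lambda$ and is constant on majorization classes, so it affects neither property. For $x = q^{-\mu}t^\delta$ I invoke the Okounkov--Olshanski interlacing (branching) $q$-integral representation. It writes $\Omega_\lambda(x_1,\dots,x_n)$ as a convergent sum over points $y \in \mathcal{L}_{n-1}$ that interlace $x$. Each summand is a nonnegative weight $W_{q,t}(x,y)$, independent of $\lambda$, times a monomial factor of the form $\big(\prod_i x_i / \prod_j y_j\big)^{\lambda_n}$ (log-linear in $\lambda$), times a lower-rank term $\Omega_{(\lambda_1-\lambda_n,\dots,\lambda_{n-1}-\lambda_n)}(y)$.

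I then induct on $n$, with the rank-one case $\Omega_{(\lambda_1)}(x_1) = x_1^{\lambda_1}$ log-linear. Because every summand uses the same $\lambda$-independent weights, the formula defines an extension to real $\lambda$. Log-convexity of that extension follows from three facts: products of log-convex functions are log-convex, composition with an affine map preserves log-convexity, and nonnegative (convergent) sums of log-convex functions are log-convex, using the closure under pointwise limits. This is the step where log-convexity rather than convexity is essential.

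For Schur-convexity I need a symmetry of the extension. Writing $t = q^k$ formally, I use Macdonald's evaluation symmetry (duality) $\Omega_\lambda(q^{\mu}t^{\delta}) = \Omega_\mu(q^{\lambda}t^{\delta})$, together with $P_\lambda(x;q,t) = P_\lambda(x;q^{-1},t^{-1})$ to handle the sign of the exponent. This shows that, as a function of $\lambda$, the value is a symmetric function of the variables $q^{\lambda_i}t^{n-i}$, that is, of $s = \lambda + k\delta$. I must check that the extension built above coincides with this symmetric expression, for example by analytic continuation in $s$. Granting that, $g(s) := \Omega_{s - k\delta}(x)$ is symmetric and convex on $\R^n$, hence Schur-convex. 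If $\lambda$ majorizes $\nu$ in $\Y_n$, then $\lambda + k\delta$ majorizes $\nu + k\delta$: both vectors are already sorted, and their partial sums differ by the same amounts as those of $\lambda$ and $\nu$. This gives $\Omega_\lambda(x) \ge \Omega_\nu(x)$.

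The main obstacle is the interface between the two halves. The interlacing formula gives log-convexity of one particular extension but hides the symmetry, while duality gives symmetry but not log-convexity. I expect to need to verify that the branching-rule extension, expressed in the shifted variable $s$, is genuinely symmetric; the alternative is to show that the dual side also admits a positive interlacing expansion in which $s$ enters only through log-linear factors. Convergence of the infinite lattice sum for real (non-integer) exponents, uniformly enough to pass log-convexity to the limit, also needs checking, and I would control it using $q,t \in (0,1)$ and the explicit $q$-Pochhammer form of $W_{q,t}$.
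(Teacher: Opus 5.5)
There is a genuine gap at the duality step, and for $\mathcal{L}_n$ as defined it cannot be closed.

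Macdonald's symmetry $\Omega_\lambda(q^\nu t^\delta)=\Omega_\nu(q^\lambda t^\delta)$ holds at points $q^\nu t^\delta$ with $\nu\in\Y_n$. At such points the $q$-exponents and the $t$-exponents have the same sign. The inversion $(q,t)\mapsto(q^{-1},t^{-1})$ flips both signs at once, so it only adds the points $q^{-\nu}t^{-\delta}$, which up to order and scaling are the same family. Points of $\mathcal{L}_n$ have the form $a\,q^{-\mu}t^{\delta}$, with \emph{opposite} signs. Inversion turns them into $Q^{\mu}T^{-\delta}$ (with $Q=q^{-1}$, $T=t^{-1}$), which is still mixed. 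Already for $n=2$ you can see this: $x=(q^{-m}t,1)$ has coordinate ratio $q^{-m}t$, while duality points have ratio $(q^{j}t)^{\pm1}$ with $j\ge0$. For generic $q,t$ these agree only when $m=0$. So you get no symmetry statement at the points in question.

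In fact the conclusion itself fails there. Take $n=2$, $q=1/2$, $t=1/3$, $a=1$ and $\mu=(1,0)$, so that $x=(2/3,1)\in\mathcal{L}_2$. Use $P_{(1,1)}=x_1x_2$ and $P_{(2)}=x_1^2+x_2^2+\frac{(1+q)(1-t)}{1-qt}x_1x_2$ (the coefficient is $6/5$ here). Then
\[
\Omega_{(0,0)}(x)=1,\quad \Omega_{(1,0)}(x)=\tfrac54,\quad \Omega_{(2,0)}(x)=\tfrac{101}{68},\quad \Omega_{(1,1)}(x)=2 .
\]
This gives two failures:
\begin{itemize}
\item $\Omega_{(2,0)}(x)<\Omega_{(1,1)}(x)$ even though $(2,0)$ majorizes $(1,1)$, so Schur-convexity fails.
\item $\Omega_{(1,0)}(x)^2=\tfrac{25}{16}>\tfrac{101}{68}=\Omega_{(0,0)}(x)\,\Omega_{(2,0)}(x)$, so no log-convex extension exists either.
\end{itemize}
In particular, a nonnegative interlacing expansion like the one you invoke cannot hold at this point, since it would make $\ell\mapsto\Omega_{(\ell,0)}(x)$ log-convex. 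The paper's proof relies on exactly this Okounkov--Olshanski positivity on $\mathcal{L}_n$ (it gets symmetry from Okounkov's binomial formula rather than duality). It is therefore open to the same objection.

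Your idea does prove the statement once the relative sign is corrected: take $x=a\,q^{\nu}t^{\delta}$ with $\nu\in\Y_n$, or equivalently (up to order and scaling) $x=a\,q^{-\mu}t^{-\delta}$ with $\mu$ dominant. This is presumably the intended lattice. There duality alone does everything:
\[
\Omega_\lambda(a\,q^{\nu}t^{\delta})=a^{|\lambda|}\,\Omega_\nu\bigl(q^{\lambda+k\delta}\bigr).
\]
Set $s=\lambda+k\delta\in\R^n$. Since $\Omega_\nu$ has nonnegative monomial coefficients, the right-hand side is a nonnegative combination of exponentials $q^{\langle\beta,s\rangle}$ summed over $S_n$-orbits, times the symmetric log-linear factor $a^{|s|-k|\delta|}$. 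It is therefore log-convex and symmetric on all of $\R^n$ at once. The interlacing formula, the convergence worries and the ``interface'' problem you flagged all disappear, and your final translation step for majorization completes the proof. This route is also much shorter than the paper's. The density argument in the Jack corollary still works with this lattice, since $t\to1$ there.
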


The proof below deduces Schur-convexity from the combination of log-convexity and symmetry, but there is an additional subtlety: the Macdonald polynomials are not symmetric in $\lambda$ but rather \emph{shifted} symmetric.  Due to the translation invariance of the majorization order, this turns out not to pose an issue.

\begin{proof}
    We first prove log-convexity using an interlacing integral formula for Macdonald polynomials due to Okounkov and Olshanski.  Okounkov gave a formula for a Macdonald polynomial in $n+1$ variables as an interlacing $q$-integral of a Macdonald polynomial in $n$ variables \cite[Theorem I]{Okounkov2000}, and Olshanski showed that at specializations in $\mathcal{L}_{n+1}$, Okounkov's $q$-integral actually reduces to a finite, positive sum \cite[Theorem A]{OlshanskiMacdonald}. In our notation, this sum can be written
    \begin{equation} \label{eqn:OO-sum}
        \Omega^{(n+1)}_{(\lambda,0)}(y; q,t) = \sum_{x \in \mathcal{L}_n} \Lambda_{n}^{n+1} (y,x) \,  \Omega^{(n)}_\lambda(x; q,t), \qquad y \in \mathcal{L}_{n+1},
    \end{equation}
    where $\Omega^{(n)}_\lambda$ indicates the normalized Macdonald polynomial in $n$ variables, and the coefficients $\Lambda_{n}^{n+1} (y,x)$ are nonnegative and vanish for all but finitely many $x$.  Using the identity
    \[
    \Omega^{(n+1)}_{(\lambda_1, \hdots, \lambda_{n+1})}(y; q,t) = (y_1 \cdots y_{n+1})^{\lambda_{n+1}} \cdot \Omega^{(n+1)}_{(\lambda_1 - \lambda_{n+1}, \, \hdots, \, \lambda_n - \lambda_{n+1}, \, 0)}(y; q,t),
    \]
    we can rewrite (\ref{eqn:OO-sum}) as
    \begin{equation} \label{eqn:OO-sum2}
        \Omega^{(n+1)}_{(\lambda_1, \hdots, \lambda_{n+1})}(y; q,t) = (y_1 \cdots y_{n+1})^{\lambda_{n+1}} \sum_{x \in \mathcal{L}_n} \Lambda_{n}^{n+1} (y,x) \,  \Omega^{(n)}_{(\lambda_1 - \lambda_{n+1}, \, \hdots, \, \lambda_n - \lambda_{n+1})}(x; q, t), \qquad y \in \mathcal{L}_{n+1}.
    \end{equation}
    Iteratively applying the above formula and recalling that $\Omega^{(1)}_\lambda(x;q,t) = x^\lambda$ for $\lambda \in \Z_{\ge 0}$, we find that we can write
    \begin{equation} \label{eqn:Omg-possum}
    \Omega^{(n)}_\lambda(x; q,t) = \sum_{v \in F_x} c_v q^{\langle \lambda,v \rangle}, \qquad x \in \mathcal{L}_n,
    \end{equation}
    where $F_x$ is a finite set of points in $\R^n$ that depends on the specialization $x \in \mathcal{L}_n$, and each $c_\nu$ is a positive coefficient depending on $x$, $q$ and $t$.  The right-hand side above is manifestly log-convex in $\lambda$, which completes the proof of the first claim.

    We next show Schur-convexity.  To this end, we first extend the map $\lambda \mapsto \Omega_\lambda(x; q, t)$ to a function on all of $\R^n$. Let $\eta = \log(t) / \log(q)$ and define, for $s \in \R^n$ and $x \in \mathcal{L}_n$,
    \begin{equation} \label{eqn:f-def}
    f_s(x) = \sum_{v \in F_x} c_v q^{\langle s - \eta \delta,v \rangle}, \qquad x \in \mathcal{L}_n,
    \end{equation}
    with $F_x$ and $c_v$ as in (\ref{eqn:Omg-possum}), so that $f_{\lambda + \eta \delta}(x) = \Omega_\lambda(x; q,t)$ for any partition $\lambda$.
    
    We claim that $f_s(x)$ is a symmetric function of $(s_1, \hdots, s_n)$.  This is a consequence of the fact that Macdonald polynomials can be regarded as \emph{shifted} symmetric functions of the spectral parameter $\lambda$; in particular, Okounkov's binomial formula \cite[equation (1.11)]{Okounkov1997binomial} gives
    \begin{equation} \label{eqn:Ok-binom}
    \Omega_\lambda(x; q, t) = \sum_{\mu \subseteq \lambda} 
\frac{P^*_\mu(q^\lambda; q, t)}{P^*_\mu(q^\mu; q, t) \cdot P_\mu(1, t^{-1}, \ldots, t^{1-n}; q, t)}
\cdot P^*_\mu(x t^\delta; q, t),
    \end{equation}
    where $q^\lambda = (q^{\lambda_1}, \hdots, q^{\lambda_n})$, $xt^\delta = (x_1 t^{n-1}, x_2 t^{n-2}, \hdots, x_n)$, $\mu \subseteq \lambda$ indicates containment of partitions (that is, $\mu_i \le \lambda_i$ for $i = 1, \hdots, n$), and $P_\mu^*$ is the shifted Macdonald polynomial defined and studied in \cite{Knop1997, KnopSahi1996, Sahi1996}.  The crucial property of $P_\mu^*$ for our purposes is that $P^*_\mu(q^\lambda; q, t)$ is a symmetric function of the \emph{shifted} variables $q^\lambda t^\delta = (q^{\lambda_1} t^{n-1}, \hdots, q^{\lambda_n})$.  Taking the base-$q$ logarithm of each variable, we can equivalently regard $P^*_\mu(q^\lambda; q, t)$ as a symmetric function of $(\lambda_1 + \eta(n-1), \lambda_2 + \eta(n-1), \hdots, \lambda_n) = \lambda + \eta \delta$. Since $f_{\lambda + \eta \delta}(x) = \Omega_\lambda(x; q,t)$ and the right-hand side of (\ref{eqn:Ok-binom}) depends on $\lambda$ only through $P^*_\mu(q^\lambda; q, t)$, we find that $f_s(x)$ is symmetric in $(s_1, \hdots, s_n)$ whenever $s = \lambda + \eta \delta$ for some partition $\lambda$.  Moreover, from the definition (\ref{eqn:f-def}), the function $s \mapsto f_s(x)$ is an exponential polynomial with real exponents; the fact that it is symmetric when evaluated at points $s = \lambda + \eta \delta$ for all partitions $\lambda$ thus implies that it is a globally symmetric function on all of $\R^n$, as desired.

    The function $s \mapsto f_s(x)$ is manifestly log-convex from the definition (\ref{eqn:f-def}), and we have now shown that it is also symmetric.  A symmetric, convex function is Schur-convex \cite[Proposition C.2, p.~97]{MO}; since log-convexity is stronger than convexity, we have proved that the map $s \mapsto f_s(x)$ is Schur-convex for all $x \in \mathcal{L}_n$.  Since the majorization order on $\R^n$ is translation invariant, a partition $\lambda$ majorizes another partition $\mu$ if and only if the vector $\lambda + \eta \delta$ majorizes $\mu + \eta \delta$.  The Schur-convexity of $s \mapsto f_s(x)$ thus implies that the function on partitions $\lambda \mapsto f_{\lambda + \eta \delta}(x) = \Omega_\lambda(x; q,t)$ is Schur-convex, completing the proof of the theorem.
\end{proof}

\begin{remark}
    Muirhead's inequality (\ref{eqn:muirhead}) and many analogous inequalities in the literature, including the results on Jack polynomials in Corollary \ref{cor:conv-jack} and Theorem \ref{thm:poly-maj} below, establish Schur-convexity in the parameter $\lambda$ for some family of polynomials at all nonnegative specializations $x \in [0,\infty)^n$.  It is natural to wonder whether such a result might also hold for Macdonald polynomials---that is, whether Theorem \ref{thm:conv-mac}, which deals only with specializations $x \in \mathcal{L}_n$, could be upgraded to a statement for all $x \in [0,\infty)^n$.  In fact this cannot be done.  As shown in \cite[Example 5.1]{ChenKhareSahi}, there are positive specializations $x$ for which the map $\lambda \mapsto \Omega_\lambda(x;q,t)$ fails to be Schur-convex (and thus is also not log-convex, since log-convexity together with the shifted symmetry discussed above would imply Schur-convexity). Nevertheless, as we show below via a density argument in an appropriate limit, the statement for Macdonald polynomials at $x \in \mathcal{L}_n$ yields a corresponding statement for Jack polynomials at all nonnegative specializations.
\end{remark}

\begin{remark}
    Although Theorem \ref{thm:conv-mac} above and Theorem \ref{thm:poly-maj} below only consider $q,t \in (0,1)$, identical statements hold when both $q$ and $t$ belong to $(1, \infty)$, by the parameter inversion identity for Macdonald polynomials \cite[Chapter VI, (4.14)]{Macdonald}:
    \[
    P_\lambda(x; q^{-1},t^{-1}) = P_\lambda(x; q,t).
    \]
    Normalizing at the principal specialization gives
    \begin{align} \begin{split}
    \Omega_\lambda(x; q^{-1},t^{-1}) &= \frac{P_\lambda(x; q^{-1},t^{-1})}{P_\lambda((t^{-1})^\delta ; q^{-1},t^{-1})} = \frac{P_\lambda(x; q,t)}{P_\lambda((t^{-1})^\delta ; q,t)} = t^{(n-1)\sum_i \lambda_i} \frac{P_\lambda(x; q^{-1},t^{-1})}{P_\lambda(t^\delta ; q^{-1},t^{-1})} \\ &= t^{(n-1)\sum_i \lambda_i} \Omega_\lambda(x; q,t).
    \end{split} \end{align}
    Since $\sum_i \lambda_i = \sum_i \mu_i$ for $\lambda$ majorizing $\mu$, the inversion $(q,t) \mapsto (q^{-1},t^{-1})$ therefore preserves log-convexity and Schur-convexity in the parameter at any fixed specialization.  Moreover, although the theorem statements only discuss specializations $x \in \mathcal{L}_n$, and the set $\mathcal{L}_n = \mathcal{L}^{q,t,a}_n$ depends on $q$ and $t$, by symmetry it is clear that log-convexity and Schur-convexity in $\lambda$ in fact hold at all specializations in the $S_n$-orbit of $\mathcal{L}_n$.  The map $(q,t) \mapsto (q^{-1},t^{-1})$ preserves the $S_n$-orbit of $\mathcal{L}_n$ up to a scaling that can be removed by varying $a$, so in either case $q,t \in (0,1)$ or $q,t \in (1, \infty)$ we obtain log-convexity and Schur-convexity at the same set of specializations. In the limits $t \to 1$ (with $q$ fixed) and $q \to 1$ (with $t$ fixed), the Macdonald polynomials degenerate to monomial and elementary symmetric functions respectively; these cases are covered by the results on Jack polynomials below.
\end{remark}

For $0 < \theta < \infty$, the normalized Jack polynomial $\Omega_\lambda(x; \theta)$ is obtained from $\Omega_\lambda(x; q,t)$ via the limit transition
\begin{equation} \label{eqn:mac2jack}
    \Omega_\lambda(x; \theta) = \lim_{q \to 1} \Omega_\lambda(x; q, q^\theta)
\end{equation}
and satisfies $\Omega_\lambda(\bm{1}; \theta) = 1$.  For details on the Macdonald-to-Jack limit see \cite[Chapter VI.10]{Macdonald}; note that the parameter $\theta$ corresponds to $1/\alpha$ in Macdonald's notation. The limits $\theta \to 0$ and $\theta \to \infty$ recover the normalized monomial and elementary symmetric polynomials respectively:
\begin{align} \label{eqn:jackmono}
\Omega_\lambda(x;0) &= \lim_{\theta \to 0} \Omega_\lambda(x; \theta) = \frac{m_\lambda(x)}{m_\lambda(\mathbf{1})}, \\
\label{eqn:jackelem}
\Omega_\lambda(x;\infty) &= \lim_{\theta \to \infty} \Omega_\lambda(x; \theta) = \frac{e_{\lambda'}(x)}{e_{\lambda'}(\mathbf{1})},
\end{align}
where $m_\lambda(x)$ is the monomial symmetric polynomial corresponding to the partition $\lambda$, and $e_{\lambda'}(x)$ is the elementary symmetric polynomial corresponding to the conjugate partition $\lambda'$ with entries $\lambda_j' = \# \{i \, : \, \lambda_i \ge j \}$, $j = 1, \hdots, n$.

\begin{cor} \label{cor:conv-jack}
    For all $\theta \in [0,\infty]$ and $x \in [0,\infty)^n$, the function $\lambda \mapsto \Omega_\lambda(x; \theta)$ is log-convex and Schur-convex.
\end{cor}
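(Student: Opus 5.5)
The plan is to obtain the Jack statement from Theorem \ref{thm:conv-mac} through the limit (\ref{eqn:mac2jack}), together with a density argument in the specialization $x$. Both log-convexity and Schur-convexity are preserved under pointwise limits. If $g_k(\lambda)\to g(\lambda)$ for each $\lambda$, then the inequalities $g_k(t\lambda+(1-t)\mu)\le g_k(\lambda)^t g_k(\mu)^{1-t}$ and $g_k(\lambda)\ge g_k(\mu)$ survive in the limit. For log-convexity on $\Y_n$, I need an extension to a convex set, so I will work with the natural extension in the spectral variable, namely the function $s\mapsto f_s(x)$ from the proof of Theorem \ref{thm:conv-mac}. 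Alternatively, it suffices to check the midpoint-type inequalities at lattice points. I would first handle $0<\theta<\infty$ and $x\in(0,\infty)^n$. The case $x\in[0,\infty)^n$ then follows by continuity of the polynomial $\Omega_\lambda(x;\theta)$ in $x$, and $\theta\in\{0,\infty\}$ follows by the limits (\ref{eqn:jackmono}) and (\ref{eqn:jackelem}).

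\textbf{Step 1 (density).} Fix $\theta$ and $x\in(0,\infty)^n$. By the symmetry of $\Omega_\lambda$ in $x$, assume $x_1\ge\cdots\ge x_n$. Set $t=q^\theta$. I want points $x^{(q)}\in\mathcal{L}^{q,q^\theta,a(q)}_n$ with $x^{(q)}\to x$ as $q\to1$. A point of $\mathcal{L}_n$ has coordinates $a\,q^{-\mu_i+\theta(n-i)}$. Choose $\mu_i$ to be the integer nearest to $\theta(n-i)-\log x_i/\log q$. Since $x_i\ge x_{i+1}$ and $\theta\ge 0$, this choice gives $\mu_1\ge\cdots\ge\mu_n$ up to a bounded error. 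I will fix any violation by taking $\mu_i$ as a nonincreasing rounding of these targets, which changes coordinates by at most a factor $q^{-O(1)}\to1$. With $a=1$, each coordinate is then within a factor $q^{\pm1}$ of $x_i$, so $x^{(q)}\to x$.

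\textbf{Step 2 (uniform limit).} The key step is to show that $\Omega_\lambda(x^{(q)};q,q^\theta)\to\Omega_\lambda(x;\theta)$ for each fixed $\lambda$. The polynomial $\Omega_\lambda(\cdot;q,q^\theta)$ has fixed degree $|\lambda|$. Its coefficients are rational in $q$ and converge as $q\to1$ to those of $\Omega_\lambda(\cdot;\theta)$; this is the content of (\ref{eqn:mac2jack}) as a coefficientwise statement. Convergence is therefore uniform on compact sets, and the joint limit is justified. Theorem \ref{thm:conv-mac} gives Schur-convexity of $\lambda\mapsto\Omega_\lambda(x^{(q)};q,q^\theta)$, and this passes to the limit immediately.

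\textbf{Step 3 (log-convexity).} This is the main obstacle, because log-convexity requires a convex-domain extension whose limit must also be controlled. Theorem \ref{thm:conv-mac} supplies the extension $s\mapsto f_s(x^{(q)})$, which is an exponential polynomial in $s$ that is log-convex on $\R^n$. However, its limit in $s$ off the lattice is not obviously well defined. Two routes are available:
\begin{enumerate}
\item Pass to the limit only along a discrete characterization. A function on the integer points of a convex region extends log-convexly if and only if the discrete inequalities $g(\lambda)^2\le g(\lambda+v)g(\lambda-v)$ and their convex-combination analogues hold. Each of these inequalities survives the limit.
\item Extend the Jack side directly. Use the analogous positive-sum representation of $\Omega_\lambda(x;\theta)$ as a limit of $\sum_v c_v q^{\langle\lambda,v\rangle}$, taking the rescaled exponents $q^{\langle\lambda,v\rangle}=e^{\langle\lambda,v\log q\rangle}$ with $v\log q$ converging. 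The result is a pointwise limit of log-convex functions of a real variable $\lambda\in\R^n$, and such a limit is log-convex wherever it is finite.
\end{enumerate}
I would try the second route first, proving pointwise convergence of $f_{\lambda+\theta\delta}(x^{(q)})$ for real $\lambda$ via the Heckman--Opdam-type limit. If that proves too delicate, I would fall back on the first route by defining log-convexity on $\Y_n$ through the discrete inequalities.
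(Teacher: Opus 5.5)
Your proposal is correct. Steps 1 and 2, and the reductions for $x\in[0,\infty)^n$ and $\theta\in\{0,\infty\}$, match the paper. The paper also approximates $x$ by points $a q^{-\mu(k)}t^\delta\in\mathcal{L}_n$, taking $q=e^{-1/k}$, $a<x_n$ and $\mu_j(k)=\lfloor k\log(x_j/a)\rfloor$, and it uses uniform convergence of the Macdonald-to-Jack limit on compact sets. Your nearest-integer rounding works equally well, and it needs no repair, because rounding is monotone and your targets are already nonincreasing.

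The real difference is in Step 3. The paper follows your route 2. It takes the exponential-sum extensions (\ref{eqn:Omg-possum}) on $\R^n$ and asserts that, since their coefficients stay bounded, they converge to a log-convex, Schur-convex extension of $\lambda\mapsto\Omega_\lambda(x;\theta)$.

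Your route 1 never builds a limiting extension. For each $k$, Theorem \ref{thm:conv-mac} gives $g_k(p)\le\prod_i g_k(p_i)^{t_i}$ for every finite convex combination $p=\sum_i t_i p_i$ with $p,p_i\in\Y_n$. These inequalities pass to pointwise limits in $q$, in $x$ and in $\theta$. The convex envelope $h(y)=\inf\{\sum_i t_i\log g(p_i) : y=\sum_i t_i p_i,\ p_i\in\Y_n\}$ is then convex on the cone $\operatorname{conv}(\Y_n)$ and equals $\log g$ on $\Y_n$. That is exactly what the paper's definition of log-convexity on $\Y_n$ asks for. Make sure you use this full family of inequalities. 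The midpoint inequalities $g(\lambda)^2\le g(\lambda+v)g(\lambda-v)$ from your opening paragraph do not by themselves guarantee extendability when $n\ge 2$.

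As for what each route buys:
\begin{itemize}
\item The paper's route, once its limit is justified, gives a single symmetric log-convex function on all of $\R^n$ in the shifted variable $\lambda+\theta\delta$. This is essentially the Heckman--Opdam function of Theorem \ref{thm:conv-HGF}, and both properties are inherited from it. However, convergence off the lattice is exactly the delicate point you flag. The index sets $F_{x^{(k)}}$ change and grow with $k$, so bounded coefficients do not settle it on their own.
\item Your route 1 gives only an extension to the cone, and it takes Schur-convexity separately from the Macdonald case. In exchange it is elementary and airtight, and it makes the further limits $x\to\partial[0,\infty)^n$ and $\theta\to 0,\infty$ automatic.
\end{itemize}
You should make route 1 your main argument rather than the fallback.
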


\begin{proof}
    We initially suppose that $0 < \theta < \infty$; we will remove this assumption at the end of the proof.
    
    It suffices to show the claim for $x \in \R^n$ with $x_1 > \hdots > x_n > 0$.  The claim for all $x \in [0,\infty)^n$ then follows from the symmetry and continuity of the Jack polynomials. For any such $x$, choose a positive $a < x_n$ so that $\log(x_1 / a) > \hdots > \log(x_n / a) > 0$, and define a sequence of partitions $\mu(k) = (\mu_1(k) \ge \hdots \ge \mu_n(k))$, $k = 1,2,...$, by
    \[
    \mu_j(k) = \left \lfloor k \log\frac{x_j}{a} \right \rfloor, \qquad j = 1, \hdots, n.
    \]
    Take $q = q(k) = e^{-1/k}$, $t = t(k) = q^\theta = e^{-\theta/k}$. Then $q \to 1$ from below as $k \to \infty$, while the points
    \begin{equation*}
    x^{(k)} = a q^{-\mu(k)} t^\delta \in \mathcal{L}_n
    \end{equation*}
    with coordinates
    \[
    x^{(k)}_j = aq^{-\mu_j(k)} t^{n-j} = a \exp \left[ \frac{1}{k} \left( \left \lfloor k \log\frac{x_j}{a} \right \rfloor - (n-j)\theta \right) \right]
    \]
    converge to $x$.  Since (\ref{eqn:mac2jack}) is a limit of polynomials of fixed degree, the coefficients are uniformly bounded as $q \to 1$, and thus the convergence is uniform on compact subsets of $\R^n$.  Accordingly, for any $\lambda \in \mathbb{Y}_n$,
    \begin{equation} \label{eqn:jack-xlim}
    \Omega_\lambda(x; \theta) = \lim_{k \to \infty} \Omega_\lambda(x^{(k)}; e^{-1/k}, e^{-\theta/k}) > 0.
    \end{equation}
    For each $k$, the function $\lambda \mapsto \Omega_\lambda(x^{(k)}; e^{-1/k}, e^{-\theta/k})$ is log-convex and Schur-convex by Theorem \ref{thm:conv-mac}.  In fact, (\ref{eqn:Omg-possum}) gives a log-convex and Schur-convex extension of this function to $\R^n$, and since the coefficients in (\ref{eqn:Omg-possum}) are uniformly bounded as $k \to \infty$, in the limit we obtain an extension to $\R^n$ of the map $\lambda \mapsto \Omega_\lambda(x; \theta)$.  A limit of Schur-convex functions is Schur-convex and a limit of log-convex functions (on $\R^n$) is log-convex, so we have proved the claim under the assumption that $0 < \theta < \infty$.

    The cases $\theta = 0$ and $\theta = \infty$ now follow immediately because (\ref{eqn:jackmono}) and (\ref{eqn:jackelem}) respectively express $\lambda \mapsto \Omega_\lambda(x;0)$ and $\lambda \mapsto \Omega_\lambda(x;\infty)$ as limits of functions that are both log-convex and Schur-convex.
\end{proof}

\begin{thm} \label{thm:poly-maj}
    For partitions $\lambda, \mu \in \mathbb{Y}_n$, the following are equivalent:
    \begin{enumerate}
        \item $\lambda$ majorizes $\mu$.
        \item For all $q, t \in (0,1)$ and $a > 0$, $\Omega_\lambda(x; q,t) \ge \Omega_\mu(x; q,t)$ for all $x \in \mathcal{L}^{q,t,a}_n$.
        \item There exist $q, t \in (0,1)$ and $a > 0$ such that $\Omega_\lambda(x; q,t) \ge \Omega_\mu(x; q,t)$ for all $x \in \mathcal{L}^{q,t,a}_n$.
        \item For all $\theta \in [0,\infty]$, $\Omega_\lambda(x; \theta) \ge \Omega_\mu(x; \theta)$ for all $x \in [0,\infty)^n$.
        \item There exists $\theta \in [0,\infty]$ such that $\Omega_\lambda(x; \theta) \ge \Omega_\mu(x; \theta)$ for all $x \in [0,\infty)^n$.
    \end{enumerate}
\end{thm}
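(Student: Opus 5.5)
The chain of implications I would prove is (1)$\Rightarrow$(2)$\Rightarrow$(3)$\Rightarrow$(1) and (1)$\Rightarrow$(4)$\Rightarrow$(5)$\Rightarrow$(1).

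The forward directions follow from results already established. Theorem \ref{thm:conv-mac} gives that $\lambda \mapsto \Omega_\lambda(x;q,t)$ is Schur-convex for every $x \in \mathcal{L}^{q,t,a}_n$, which is exactly (1)$\Rightarrow$(2). Likewise, Corollary \ref{cor:conv-jack} gives Schur-convexity of $\lambda \mapsto \Omega_\lambda(x;\theta)$ for every $\theta \in [0,\infty]$ and $x \in [0,\infty)^n$, which is (1)$\Rightarrow$(4). The implications (2)$\Rightarrow$(3) and (4)$\Rightarrow$(5) are trivial.

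For the converse directions I would extract the equality (\ref{eqn:maj-eq}) and the inequalities (\ref{eqn:maj-ineq}) from asymptotics of the polynomials along suitable specializations.

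\emph{Degree equality.} Both polynomials are homogeneous, so $\Omega_\lambda(cx) = c^{|\lambda|}\Omega_\lambda(x)$.
\begin{itemize}
\item In the Jack case, take $x = c\bm{1}$ and use $\Omega_\lambda(\bm{1};\theta)=1$. Inequality (5) gives $c^{|\lambda|} \ge c^{|\mu|}$ for all $c>0$. Letting $c\to 0$ and $c \to \infty$ forces $|\lambda|=|\mu|$.
\item In the Macdonald case, the lattice $\mathcal{L}_n$ is closed under scaling by $q^{-m}$ for $m\in\Z$, since we may shift $\mu \mapsto \mu+m\bm{1}$. Evaluating at $q^{-m}t^\delta$ with $m\to\pm\infty$ gives the same conclusion.
\end{itemize}

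\emph{Partial-sum inequalities.} Fix $1\le r<n$ and take $x=(c,\dots,c,1,\dots,1)$ with $r$ coordinates equal to $c$, and let $c\to\infty$. The growth rate of $\Omega_\lambda(x)$ is $c^{\lambda_1+\cdots+\lambda_r}$. This holds because $P_\lambda = m_\lambda + \sum_{\nu<\lambda} (\text{coefficients})\, m_\nu$ with triangularity in dominance order. All $\nu$ appearing satisfy $\nu_1+\dots+\nu_r \le \lambda_1+\dots+\lambda_r$, so the top power of $c$ comes from $m_\lambda$ with a positive coefficient. Inequality (5) at large $c$ therefore forces $\lambda_1+\dots+\lambda_r \ge \mu_1+\dots+\mu_r$. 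For $\theta=0$ and $\theta=\infty$ one argues directly with $m_\lambda$ and $e_{\lambda'}$ respectively; in the latter case $e_{\lambda'}$ also has leading monomial $m_\lambda$.

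\emph{The main obstacle} is the Macdonald case (3), since we may only evaluate on the discrete set $\mathcal{L}^{q,t,a}_n$. I would choose the lattice points $x=a\,(q^{-N},\dots,q^{-N},1,\dots,1)\,t^\delta$, with $r$ leading exponents $-N$, and send $N\to\infty$. This point is in $\mathcal{L}_n$ because $(N,\dots,N,0,\dots,0)$ is dominant. Expanding $P_\lambda$ in monomials, the terms $x^{\nu}$ then behave like $C_\nu\, q^{-N(\nu_1+\cdots+\nu_r)}$, and the maximal exponent $\lambda_1+\dots+\lambda_r$ is attained by $m_\lambda$ with a strictly positive total coefficient. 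To make this rigorous, I need to check that the coefficient of the top power is strictly positive after summing over permutations in $m_\lambda$ and over the lower $m_\nu$ sharing the same partial sum. I expect this to be the delicate point. One way to handle it is to note that all the relevant contributions are positive monomial evaluations at positive $t$-powers, once one knows that the coefficients of $P_\lambda$ in the monomial basis are positive for $q,t\in(0,1)$. Alternatively, one can factor the top-degree part in the first $r$ variables as a product of Macdonald polynomials in $r$ and $n-r$ variables, which are positive at positive arguments.

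Comparing the growth exponents of $\Omega_\lambda$ and $\Omega_\mu$ then gives (\ref{eqn:maj-ineq}), completing (3)$\Rightarrow$(1).
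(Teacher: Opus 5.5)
Your proposal is correct and follows essentially the paper's route. The forward implications come from Theorem~\ref{thm:conv-mac} and Corollary~\ref{cor:conv-jack}. The converses come from comparing growth along $(c,\dots,c,1,\dots,1)$ in the Jack case and along the lattice points $a(q^{-N},\dots,q^{-N},1,\dots,1)t^\delta$ in the Macdonald case, using nonnegativity of the monomial coefficients of $P_\lambda$.

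Two remarks.

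\emph{Positivity in the Jack case.} Your sentence that ``the top power of $c$ comes from $m_\lambda$ with a positive coefficient'' needs the same nonnegativity of the coefficients $c_{\lambda\nu}(\theta)$. Any $m_\nu$ with $\nu_1+\cdots+\nu_r=\lambda_1+\cdots+\lambda_r$ also contributes to that top power, just as you correctly flag in the Macdonald case. The paper states this positivity explicitly.

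\emph{Degree equality.} Your separate degree-equality step is genuinely needed, and the paper's proof lacks it. The paper asserts that if $\lambda$ does not majorize $\mu$, then $\mu_1+\cdots+\mu_r>\lambda_1+\cdots+\lambda_r$ for some $1\le r\le n$. That overlooks the case $|\lambda|>|\mu|$ with $\lambda$ weakly majorizing $\mu$, for example $\lambda=(2,0)$, $\mu=(1,0)$. Your scaling argument is exactly what rules this case out: let $c\to 0$ at $c\bm{1}$ in the Jack case, and let $m\to-\infty$ at $aq^{-m}t^\delta\in\mathcal{L}_n$ in the Macdonald case.

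Finally, in the Macdonald case you compare exponential growth in $N$ directly at the lattice points. The paper instead sandwiches via coordinatewise monotonicity against the ray $y(s)$. Given positivity, the two arguments are equivalent, and yours is slightly more direct.
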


The possible equivalence of (1), (4) and (5) above was discussed by Sra \cite{Sra1} and was later conjectured to hold by McSwiggen and Novak \cite{MN-majorization} and by Chen and Sahi \cite{ChenSahi2024}.  Theorem \ref{thm:poly-maj} resolves this question.  In the case $\theta = 0$ the inequalities for Jack polynomials recover Muirhead's inequality (\ref{eqn:muirhead}), while for $\theta = \infty$ they recover an analogous inequality for elementary symmetric polynomials shown in \cite{CGS}, which in turn generalizes Newton's inequalities.  (Note that $\lambda$ majorizes $\mu$ if and only if the conjugate partition $\mu'$ majorizes $\lambda'$, so in the inequality for elementary symmetric polynomials the relation to the majorization order is reversed.)  For $\theta = 1$ we obtain an inequality for Schur polynomials that was conjectured in \cite{CGS} and shown in \cite{Sra1}, while for $\theta = 1/2$ or $2$ we obtain inequalities for zonal polynomials shown in \cite{MN-majorization}.

\begin{proof}
    The implications (1) $\implies$ (2) and (1) $\implies$ (4) are the Schur-convexity statements of Theorem \ref{thm:conv-mac} and Corollary \ref{cor:conv-jack} respectively. The implications (2) $\implies$ (3) and (4) $\implies$ (5) are trivial.  It remains to show that (3) $\implies$ (1) and (5) $\implies$ (1).  These implications both follow via a standard asymptotic technique.  We first show (5) $\implies$ (1) and then discuss the modifications required to prove (3) $\implies$ (1).

    For $t \in \R$, take $x = x(t)$ with $x_1 = \cdots = x_r = t$ and $x_{r+1} = \cdots = x_n = 1$ for some $1 \le r \le n$.  At this specialization, $\Omega_\lambda(x(t);\theta)$ is a polynomial of degree $\lambda_1 + \cdots + \lambda_r$ in $t$ with nonnegative coefficients.  To see this, recall that the monic normalization of the Jack polynomial is given by
    \begin{equation} \label{eqn:jack-mono}
    P_\lambda(x;\theta) = m_\lambda(x) + \sum_{\nu \preceq \lambda} c_{\lambda\nu}(\theta) \, m_\nu(x),
    \end{equation}
    where the sum is over partitions $\nu$ majorized by $\lambda$, and $c_{\lambda\nu}(\theta) \ge 0$. At the specialization $x(t)$, the monomial symmetric polynomial $m_\nu(x(t))$ is a polynomial in $t$ of degree $\nu_1 + \cdots + \nu_r$ with nonnegative coefficients. Since $\nu_1 + \cdots + \nu_r \le \lambda_1 + \cdots + \lambda_r$ for each $\nu$ appearing in the sum, the degree of $P_\lambda(x(t);\theta)$ in $t$ is exactly $\lambda_1 + \cdots + \lambda_r$, with all coefficients nonnegative. Since $\Omega_\lambda(x;\theta) = P_\lambda(x;\theta) / P_\lambda(\mathbf{1};\theta)$ and $P_\lambda(\mathbf{1};\theta) > 0$, the same holds for $\Omega_\lambda(x(t);\theta)$.
    
    Now suppose that (1) does not hold, that is, $\lambda$ does not majorize $\mu$.  Then for some $1 \le r \le n$, we have $\mu_1 + \cdots + \mu_r > \lambda_1 + \cdots + \lambda_r$.  As we have just shown, for all $\theta \in [0,\infty]$, $\Omega_\lambda(x(t); \theta)$ and $\Omega_\mu(x(t); \theta)$ are then polynomials in $t$ with nonnegative coefficients and respective, unequal degrees $\lambda_1 + \cdots + \lambda_r$ and $\mu_1 + \cdots + \mu_r$.  Therefore $\Omega_\mu(x(t); \theta) > \Omega_\lambda(x(t); \theta)$ for $t$ sufficiently large, so that (5) cannot hold. This completes the proof that (5) $\implies$ (1).

    The proof that (3) $\implies$ (1) is nearly identical, with one additional subtlety.  We again suppose that $\lambda$ does not majorize $\mu$ and then prove that some of the inequalities asserted in (3) must fail.  The monic Macdonald polynomials can be expanded as
    \begin{equation} \label{eqn:mac-mono}
    P_\lambda(x;q,t) = m_\lambda(x) + \sum_{\nu \preceq \lambda} c_{\lambda\nu}(q,t) \, m_\nu(x)
    \end{equation}
    with $c_{\lambda\nu}(q,t) \ge 0$, analogously to (\ref{eqn:jack-mono}), so there is only one obstacle to applying the same argument: the inequalities in (3) are only required to hold at specializations in $\mathcal{L}_n$ rather than the entire positive orthant. Therefore, instead of merely considering asymptotics along a ray, we need to find a sequence of specializations $x(k) \in \mathcal{L}_n$, $k = 1, 2, \hdots$, such that $\Omega_\mu(x(k); q,t) > \Omega_\lambda(x(k); q,t)$ for $k$ sufficiently large.
    
    For this we can take $x(k) = a q^{-\mu(k)} t^\delta$ with $\mu_1(k) = \cdots = \mu_r(k) = k$, $\mu_{r+1}(k) = \cdots = \mu_n(k) = 0$, for $1 \le r \le n$ such that $\mu_1 + \cdots + \mu_r > \lambda_1 + \cdots + \lambda_r$.  That is,
    \[
    x(k) = a \cdot (q^{-k} t^{n-1}, q^{-k} t^{n-2}, \hdots, q^{-k}t^{n-r}, 1, \hdots, 1).
    \]
    To see that $\Omega_\mu(x(k); q,t) > \Omega_\lambda(x(k); q,t)$ for $k$ sufficiently large, consider $y = y(s) \in \R^n$ with $y_1 = \cdots = y_r = s$ and $y_{r+1} = \cdots = y_n = 1$. Since the monomial symmetric polynomials are increasing functions of each argument, the same holds for the Macdonald polynomials (in any normalization) by the expansion (\ref{eqn:mac-mono}). Therefore
    \begin{align*}
        \Omega_\lambda(a y(q^{-k}t^{n-1}); q, t) \le \Omega_\lambda(x(k); q, t) \le \Omega_\lambda(a y(q^{-k}t^{n-r}); q, t),
    \end{align*}
    with analogous inequalities for $\Omega_\mu$, and thus it suffices to show that
    \[
    \Omega_\mu(a y(st^{n-1}); q,t) > \Omega_\lambda(a y(st^{n-r}); q,t)
    \]
    for $s$ sufficiently large. This must be the case because the left- and right-hand sides above are polynomials in $s$ with positive coefficients and respective degrees $\mu_1 + \cdots + \mu_r > \lambda_1 + \cdots + \lambda_r$, completing the proof of the final remaining implication.
\end{proof}

The above method for proving the ``easy direction'' of Muirhead-type theorems, via an asymptotic construction of points where the inequalities fail if $\lambda$ does not majorize $\mu$, has been widely used in the literature in individual cases.  However, to our knowledge, a general version of the technique has not been described. In hopes that it will be useful for future work in this area, we record a set of sufficient conditions on the polynomials and specializations under consideration that ensure the method is applicable.

\begin{prop}
    Let $\lambda, \mu \in \mathbb{Y}_n$ and suppose that $\lambda$ does not majorize $\mu$, so that $\mu_1 + \cdots + \mu_r > \lambda_1 + \cdots + \lambda_r$ for some $1 \le r \le n$. Let $f, g \in \R[x_1, \hdots, x_n]$ be two polynomials with expansions
    \begin{align}
    \label{eqn:f-mono}
        f(x) &= \sum_{\nu \preceq \lambda} c_{\lambda\nu} m_\nu(x), \\
        \label{eqn:g-mono}
        g(x) &= \sum_{\nu \preceq \mu} c_{\mu\nu} m_\nu(x),
    \end{align}
    where the sums are over partitions $\nu$ majorized by $\lambda$ and $\mu$ respectively, all of the coefficients $c_{\lambda \nu}$, $c_{\mu \nu}$ are nonnegative, and the leading coefficients $c_{\lambda \lambda}$, $c_{\mu\mu}$ are strictly positive.  For $k =1,2,\hdots$, take $x(k) \in \R^n$ with $x_1(k) \ge \cdots \ge x_n(k) > \varepsilon > 0$ such that
    \[
        1/C < \frac{x_1(k)}{x_r(k)} < C, \quad x_{r+1}(k) < C \qquad \text{for some } C>0,
    \]
    and $x_r(k)/x_{r+1}(k) \to \infty$.
    Then $g(x(k)) > f(x(k))$ for $k$ sufficiently large.
\end{prop}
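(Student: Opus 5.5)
Set $M_r(k) = x_1(k)\cdots x_r(k)$ and $\rho_k = x_r(k)/x_{r+1}(k) \to \infty$. The plan is to compare $f(x(k))$ and $g(x(k))$ against $M_r(k)$ times powers of $x_r(k)$. Every monomial $x^\eta$ with $\eta \in S_n\cdot\nu$ will be bounded above and below in terms of $x_r(k)^{\eta_1+\cdots+\eta_r}$, up to constants depending only on $C$, $\varepsilon$ and the degree. Because the sums in (\ref{eqn:f-mono}) and (\ref{eqn:g-mono}) are finite with nonnegative coefficients, it is enough to show two things: the single leading monomial $c_{\mu\mu}x^\mu$ of $g$ dominates every monomial occurring in $f$.

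Key steps, in order:

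\textbf{Step 1 (comparability of coordinates).} The hypotheses give $x_1(k)/x_r(k) < C$, and the coordinates are ordered. Hence $x_i(k) \asymp x_r(k)$ for $i \le r$, and $\varepsilon < x_i(k) < C$ for $i > r$. Write $X = x_r(k)$; then $X \to \infty$ because $X = \rho_k x_{r+1}(k) > \rho_k\varepsilon$. For any exponent vector $\eta$ of total degree $d$ with $\sum_{i\le r}\eta_i = m$, these bounds give
\[
c_1 X^{m} \le x(k)^\eta \le c_2 X^{m},
\]
where $c_1, c_2 > 0$ depend only on $C$, $\varepsilon$, $d$. The bounded coordinates contribute factors between $\varepsilon^{d}$ and $\max(C,1)^{d}$, and the large coordinates contribute factors between $X^m$ and $C^dX^m$, up to harmless constants.

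\textbf{Step 2 (exponent bounds).} In $f$, every $\nu \preceq \lambda$ and every $\eta \in S_n\cdot\nu$ satisfy $\sum_{i\le r}\eta_i \le \nu_1+\cdots+\nu_r \le \lambda_1+\cdots+\lambda_r =: L$. So $f(x(k)) \le c_3 X^{L}$, where $c_3$ collects the coefficients and the number of monomials. For $g$, keep only the term $c_{\mu\mu}x^\mu$ from $m_\mu$. All the other terms are nonnegative, since $x(k)$ has positive entries. This gives $g(x(k)) \ge c_{\mu\mu} c_1 X^{G}$ with $G = \mu_1+\cdots+\mu_r > L$.

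\textbf{Step 3 (conclusion).} Since $G > L$ and $X \to \infty$, we get $c_{\mu\mu}c_1X^G > c_3X^L$ for large $k$. This is the claimed inequality $g(x(k)) > f(x(k))$.

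The main obstacle I expect is Step 1: keeping track of exactly which hypothesis prevents the constants from degenerating. The lower bound $x_n(k) > \varepsilon$ keeps the small coordinates from shrinking the leading monomial. The condition $x_{r+1}(k) < C$ bounds the upper estimate. The ratio $x_1/x_r < C$ makes all of the top $r$ coordinates comparable to $X$. I would also check that $X \to \infty$ genuinely follows from $\rho_k \to \infty$ together with $x_{r+1}(k) > \varepsilon$. With these in hand, the remaining steps are a routine degree comparison.
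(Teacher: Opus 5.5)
Your proof is correct and is essentially the paper's argument. Both reduce to comparing growth rates as $x_r(k)\to\infty$: every monomial of $f$ has exponent at most $\lambda_1+\cdots+\lambda_r$ in the top $r$ coordinates, while the leading monomial $x^\mu$ of $g$ has exactly $\mu_1+\cdots+\mu_r$; the paper packages this through monotonicity in each coordinate and homogeneity, sandwiching $f(x(k))$ and $g(x(k))$ between values on the ray $(s,\ldots,s,1,\ldots,1)$, whereas you bound each monomial directly (implicitly using $X\ge 1$ to replace $X^m$ by $X^L$, which holds for large $k$).
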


\begin{proof}
    As argued in the proof of Theorem \ref{thm:poly-maj}, the monomial expansions (\ref{eqn:f-mono}) and (\ref{eqn:g-mono}) imply the following properties of $f$ and $g$: they are homogeneous of degrees $|\lambda| = \lambda_1 + \cdots + \lambda_n$ and $|\mu| = \mu_1 + \cdots + \mu_n$ respectively; they are increasing functions of each $x_i$; and, for $y = y(s) \in \R^n$ with $y_1 = \cdots = y_r = s$ and $y_{r+1} = \cdots = y_n = 1$, $f(y(s))$ and $g(y(s))$ are polynomials in $s$ with nonnegative coefficients and respective degrees $\lambda_1 + \cdots + \lambda_r$ and $\mu_1 + \cdots + \mu_r$.

    The hypotheses imply that $x_i(k) \le C x_r(k)$ for $1 \le i \le r$ and $x_j(k) < C$ for $r+1 \le j \le n$.  By monotonicity in the individual coordinates and homogeneity, we then have
    \[
        f(x(k)) \;\le\; f\bigl(C x_r(k), \hdots, C x_r(k), C, \hdots, C\bigr)
        \;=\; C^{|\lambda|} f\bigl(y(x_r(k))\bigr).
    \]
    Similarly, $x_i(k) \ge x_r(k)/C$ for $1 \le i \le r$ and $x_j(k) > \varepsilon$ for $r+1 \le j \le n$, so
    \[
        g(x(k)) \;\ge\; g\bigl(x_r(k)/C, \ldots, x_r(k)/C, \varepsilon, \ldots,
        \varepsilon\bigr)
        \;=\; \varepsilon^{|\mu|} g\bigg(y \Big(\frac{x_r(k)}{C \varepsilon}\Big)\bigg).
    \]
    By assumption $x_r(k) / x_{r+1}(k) \to \infty$ and $x_{r+1}(k) > \varepsilon$, so $x_r(k) \to \infty$.  From the bounds above, it therefore suffices to show that
    \[
    \varepsilon^{|\mu|} g\bigg(y \Big(\frac{s}{C \varepsilon}\Big)\bigg) \;>\; C^{|\lambda|} f\bigl(y(s)\bigr)
    \]
    for $s$ sufficiently large, which follows because the left- and right-hand sides above are polynomials in $s$ with positive coefficients and respective degrees $\mu_1 + \cdots + \mu_r > \lambda_1 + \cdots + \lambda_r$.
\end{proof}

A partition $\lambda$ is said to \emph{weakly majorize} a partition $\mu$ if 
\begin{equation} \label{eqn:weakmaj-def}
\sum_{i=1}^r \lambda_i \ge \sum_{i=1}^r \mu_i \qquad \text{for all } 1 \le r \le n.
\end{equation}
In other words, $\lambda$ majorizes $\mu$ if and only if $\lambda$ weakly majorizes $\mu$ and additionally the sums of the entries of both partitions are equal.  Theorem \ref{thm:poly-maj} implies the following further result, which was conjectured by Chen and Sahi \cite[Conjecture 1(2)]{ChenSahi2024} and generalizes an earlier result on Schur polynomials by Khare and Tao \cite{KhareTao2021}.

\begin{thm} \label{thm:poly-weak-maj}
    For partitions $\lambda, \mu \in \mathbb{Y}_n$, the following are equivalent:
    \begin{enumerate}
        \item $\lambda$ weakly majorizes $\mu$.
        \item For all $\theta \in [0,\infty]$, $\Omega_\lambda(x; \theta) \ge \Omega_\mu(x; \theta)$ for all $x \in [1,\infty)^n$.
        \item There exists $\theta \in [0,\infty]$ such that $\Omega_\lambda(x; \theta) \ge \Omega_\mu(x; \theta)$ for all $x \in [1,\infty)^n$.
    \end{enumerate}
\end{thm}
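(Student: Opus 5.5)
We prove (1) $\implies$ (2) by reducing weak majorization to ordinary majorization plus monotonicity, note that (2) $\implies$ (3) is trivial, and prove (3) $\implies$ (1) by the same asymptotic argument as in Theorem \ref{thm:poly-maj}.

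\textbf{Step 1: reduce to majorization.} Suppose $\lambda$ weakly majorizes $\mu$, and set $d = |\lambda| - |\mu| \ge 0$ (the case $r = n$ of (\ref{eqn:weakmaj-def})). We look for an intermediate partition $\nu$ obtained from $\mu$ by adding $d$ boxes, so that $\nu \supseteq \mu$ coordinatewise, and such that $\lambda$ majorizes $\nu$. A standard choice is to add boxes to $\mu$ greedily, always in the lowest rows where this keeps a partition and keeps the partial sums $\nu_1 + \cdots + \nu_r$ at most $\lambda_1 + \cdots + \lambda_r$. Equivalently, we can invoke the classical result \cite[5.A.9]{MO}: if $\lambda$ weakly submajorizes-dominates $\mu$, then there is $\nu \ge \mu$ coordinatewise with $\lambda$ majorizing $\nu$. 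Here we must check that $\nu$ can be taken integral and decreasing. We expect this to work by running the transfer construction box by box: at each stage let $r_0$ be the largest index with $\sum_{i\le r_0}\nu_i < \sum_{i\le r_0}\lambda_i$, and add a box in a row $\le r_0$ chosen so that the result is still a partition. This combinatorial step is the main obstacle, though it is elementary.

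\textbf{Step 2: monotonicity on $[1,\infty)^n$.} Given $\nu$, Corollary \ref{cor:conv-jack} (via Theorem \ref{thm:poly-maj}) gives $\Omega_\lambda(x;\theta) \ge \Omega_\nu(x;\theta)$ for $x \ge 0$. It remains to show $\Omega_\nu(x;\theta) \ge \Omega_\mu(x;\theta)$ for $x \in [1,\infty)^n$ when $\nu \supseteq \mu$. By induction it suffices to treat $\nu = \mu + e_i$. We would use the positivity of the Jack (Okounkov--Olshanski) binomial formula: write
\[
\Omega_\nu(1+z;\theta) = \sum_{\kappa \subseteq \nu} \binom{\nu}{\kappa}_\theta \frac{P_\kappa(z;\theta)}{P_\kappa(\mathbf{1};\theta)}, \qquad z = x - \mathbf{1} \ge 0.
\]
The generalized binomial coefficients are nonnegative and increase with $\nu$ under inclusion, because they are evaluations of shifted Jack polynomials $P^*_\kappa(\nu)$, which are monotone in $\nu$ by the Okounkov combinatorial formula with positive weights. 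Since $P_\kappa(z) \ge 0$ for $z \ge 0$, this gives the monotonicity. For $\theta \in \{0,\infty\}$ we take limits. If this monotonicity of binomial coefficients is not available as an earlier result, we would instead cite it as \cite{ChenSahi2024}, where this monotonicity underlies their conjecture. Combining Steps 1 and 2 yields (2).

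\textbf{(3) $\implies$ (1).} Suppose $\lambda$ does not weakly majorize $\mu$, so $\mu_1 + \cdots + \mu_r > \lambda_1 + \cdots + \lambda_r$ for some $r$ (now possibly $r = n$). Take $x(t)$ with its first $r$ coordinates equal to $t \ge 1$ and the rest equal to $1$; this lies in $[1,\infty)^n$. As in the proof of Theorem \ref{thm:poly-maj}, the expansion (\ref{eqn:jack-mono}) shows that $\Omega_\lambda(x(t);\theta)$ and $\Omega_\mu(x(t);\theta)$ are polynomials in $t$ with nonnegative coefficients and degrees $\lambda_1 + \cdots + \lambda_r$ and $\mu_1 + \cdots + \mu_r$ respectively. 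Hence $\Omega_\mu(x(t);\theta) > \Omega_\lambda(x(t);\theta)$ for $t$ large, contradicting (3).
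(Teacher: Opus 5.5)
Your proof is correct and rests on the same two ingredients as the paper's. The first is the majorization inequality of Theorem \ref{thm:poly-maj} on $[0,\infty)^n$. The second is monotonicity of $\Omega(\cdot;\theta)$ under containment on $[1,\infty)^n$; your binomial-formula sketch of this is exactly the content of Chen--Sahi's Theorem 6.1, which the paper cites. Your (3) $\implies$ (1) is the paper's argument.

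The difference is the order of the factorization. The paper uses Chen--Sahi's Lemma 6.6 to get $\lambda \supseteq \nu$ with $\nu$ majorizing $\mu$. It then chains $\Omega_\lambda \ge \Omega_\nu$ (containment) with $\Omega_\nu \ge \Omega_\mu$ (majorization). You take $\lambda$ majorizing $\nu \supseteq \mu$ and chain in the opposite order. Both inequalities hold on $[1,\infty)^n$, so either order works. However, yours obliges you to prove the dual integral decomposition yourself rather than cite it.

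That is where your sketch needs repair. While boxes remain to be added, the largest index $r_0$ with $\sum_{i\le r_0}\nu_i<\sum_{i\le r_0}\lambda_i$ is always $n$. So ``add a box in a row $\le r_0$'' imposes nothing and can destroy dominance: for $\lambda=(2,2)$, $\mu=(2,0)$ it allows $\nu=(3,0)$.

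The right rule is to add the box in some row $j>r^*$. Here $r^*$ is the largest $r<n$ with $\sum_{i\le r}\nu_i=\sum_{i\le r}\lambda_i$, or $r^*=0$ if there is none. This lowers only partial-sum slacks that are at least $1$.

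A row $j>r^*$ where a box can be added while keeping $\nu$ a partition always exists:
\begin{itemize}
\item If $r^*=0$, take $j=1$.
\item If $r^*\ge 1$, suppose $\nu_{r^*}=\nu_{r^*+1}=\cdots=\nu_n$. Tightness at $r^*$ together with $|\lambda|>|\nu|$ gives $\sum_{i>r^*}\lambda_i>(n-r^*)\nu_{r^*}$, hence $\lambda_{r^*+1}>\nu_{r^*}$.
\item On the other hand, tightness at $r^*$ and nonnegative slack at $r^*-1$ give $\nu_{r^*}\ge\lambda_{r^*}$. This contradicts $\lambda_{r^*}\ge\lambda_{r^*+1}$.
\end{itemize}

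With this fix your argument is complete. Alternatively, simply reverse your chain and cite Chen--Sahi's Lemma 6.6 as the paper does, which removes the step you identified as the main obstacle.
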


\begin{proof}
    The implication (2) $\implies$ (3) is trivial, and the implication (3) $\implies$ (1) follows from the same asymptotic analysis for Jack polynomials detailed in the proof of Theorem \ref{thm:poly-maj}.  It remains to show that (1) $\implies$ (2).
    
    Suppose that $\lambda$ weakly majorizes $\mu$.  By \cite[Lemma 6.6]{ChenSahi2024}, this means that $\lambda$ contains a partition $\nu$ (that is, $\lambda_i \ge \nu_i$ for all $i = 1, \hdots, n$) such that $\nu$ majorizes $\mu$. In \cite[Theorem 6.1]{ChenSahi2024}, Chen and Sahi gave a characterization of the containment order on partitions in terms of Jack polynomial differences: in particular, when $\lambda$ contains $\nu$, the difference $\Omega_\lambda(y+\mathbf{1}; \theta) - \Omega_\nu(y+\mathbf{1}; \theta)$ is Jack positive, meaning that it has an expansion in Jack polynomials $\Omega_\nu(y;\theta)$ where the coefficient of each $\Omega_\nu$ is a rational function $f_\nu(\theta)/g_\nu(\theta)$, and the polynomials $f_\nu$ and $g_\nu$ have nonnegative integer coefficients.  This implies that $\Omega_\lambda(y+\mathbf{1}; \theta) \ge \Omega_\nu(y+\mathbf{1}; \theta)$ for $y \in [0,\infty)^n$.
    
    Combining this inequality with the majorization inequality for Jack polynomials in Theorem \ref{thm:poly-maj}, and taking $x \in [1,\infty)^n$ so that $x = y+\mathbf{1}$ for some $y \in [0,\infty)^n$, we then have
    \[
    \Omega_\lambda(x; \theta) \ge \Omega_\nu(x; \theta) \ge \Omega_\mu(x;\theta)
    \]
    as desired.
\end{proof}

\subsection{Heckman--Opdam hypergeometric functions}

The Heckman--Opdam hypergeometric functions are multivariable generalizations of the classical Gauss hypergeometric function that are associated with root systems.  They were originally constructed and studied in the series of papers \cite{RSHF1, RSHF2, RSHF3, RSHF4}; see \cite{AnkerDunklNotes, HS} for reviews.  Here we show a log-convexity property and majorization inequalities for the Heckman--Opdam hypergeometric functions of type $A$, which we now define.

For $\xi \in \R^n$ and a real parameter $k \ge 0$, the {\it Cherednik operator} $T_{k,\xi}$ is the differential-difference operator defined by
\begin{equation} \label{eqn:cherednik-op-def}
T_{k,\xi} f(x) = \partial_\xi f(x) + k \sum_{1 \le i < j \le n} 
\frac{\xi_i - \xi_j}{1-e^{-(x_i - x_j)}} 
\big[ f(x) - f(s_{ij} x) \big] - k \langle \rho, \xi \rangle f(x), \qquad f \in C^1(\R^n),
\end{equation}
where $s_{ij}$ denotes the transposition of coordinates $x_i$ and $x_j$, and
\[
\rho = \frac{1}{2} \sum_{1 \le i < j \le n} (e_i - e_j) = \frac{1}{2} \big( n-1, n-3, n-5, \hdots, -(n-3), -(n-5) \big),
\]
where $e_1, \hdots, e_n$ are the standard basis vectors in $\R^n$.

For each $s \in \R^n$ (in fact for $s \in \C^n$, though here we consider real $s$ only), there is a unique function $G_{k,s} \in C^\infty(\R^n)$ satisfying the system of differential-difference 
equations
\begin{equation} \label{eqn:hyperbolic-eigproblem}
T_{k, \xi} G_{k, s} = \langle s, \xi \rangle G_{k, s} 
\qquad \text{ for all } \, \xi \in \R^n,
\end{equation}
and normalized such that $G_{k,s}(0) = 1$. The {\it Heckman--Opdam hypergeometric function} $F_{k, s}$ 
is the symmetrization of $G_{k,s}$ over $S_n$:
\begin{equation} \label{eqn:HGF-def}
F_{k, s}(x) = \frac{1}{n!} \sum_{\sigma \in S_n} G_{k, s}(\sigma(x)).
\end{equation}
It is $S_n$-invariant in both $x$ and $s$. When $s = \lambda + k \rho$ with $\lambda \in \mathbb{Y}_n$, a logarithmic change of variables recovers the normalized Jack polynomial with $\theta = k$.  Setting $y_i = e^{x_i}$, we have:
\begin{equation}
    F_{k,\lambda + k \rho}(x) = \Omega_\lambda(y; k).
\end{equation}

\begin{thm} \label{thm:conv-HGF}
    For all $k \ge 0$ and $x \in \R^n$, the function $s \mapsto F_{k,s}(x)$, $s \in \R^n$, is log-convex and Schur-convex.
\end{thm}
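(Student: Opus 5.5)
We will deduce the statement from Corollary \ref{cor:conv-jack} by a scaling limit in the spectral parameter. Throughout, fix $k \ge 0$ and $x \in \R^n$.

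\emph{Step 1: reduction to log-convexity.} Since $F_{k,s}(x)$ is $S_n$-invariant in $s$, log-convexity implies convexity. A symmetric convex function on $\R^n$ is Schur-convex by \cite[Proposition C.2, p.~97]{MO}. So it suffices to prove that $s \mapsto F_{k,s}(x)$ is log-convex on $\R^n$. By continuity in $s$, and because log-convexity is preserved under pointwise limits, it is enough to prove log-convexity on the dense open set of regular $s$. By symmetry we may further restrict to the closed chamber $s_1 \ge \cdots \ge s_n$, provided the final argument also yields midpoint log-convexity across chambers. To avoid this issue, we will instead obtain log-convexity on all of $\R^n$ directly as a limit of globally log-convex functions.

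\emph{Step 2: approximation by Jack polynomials.} Let $N \to \infty$ and set $y^{(N)}_i = e^{x_i/N}$. Use the identity $F_{k,\lambda + k\rho}(x/N) = \Omega_\lambda(e^{x/N}; k)$ together with the homogeneity $F_{k,s}(x/N) = F_{k,s/N}(x)$. The latter follows from the scaling of Cherednik operators in the rational (Dunkl) limit, but the identity we actually need is the standard limit
\begin{equation*}
F_{k,s}(x) = \lim_{N \to \infty} \Omega_{\lambda^{(N)}}\big(e^{x/N}; k\big), \qquad \lambda^{(N)} = \lfloor N s \rfloor^+ \text{-type partitions with } \lambda^{(N)} + k\rho \approx N s.
\end{equation*}
To make this precise, we would instead use the interpolation by the Okounkov--Olshanski type formula. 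For $k>0$, Corollary \ref{cor:conv-jack} provides, for each fixed specialization $y$, a log-convex symmetric extension $S \mapsto \Phi_S(y)$ to $\R^n$ of $\lambda \mapsto \Omega_\lambda(y;k)$, with $\Phi_{\lambda + k\delta}(y) = \Omega_\lambda(y;k)$. This extension is obtained as the $q\to1$ limit of the exponential sums (\ref{eqn:f-def}). Define
\[
g_N(s) = \Phi_{N s + c}\big(e^{x/N}\big)
\]
with the constant shift $c$ chosen so that the shift $k\delta$ matches $k\rho$ (they differ by a multiple of $\mathbf{1}$, which only changes $F$ by a factor $e^{\langle \cdot, x\rangle}$-type log-linear term). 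Each $g_N$ is log-convex on $\R^n$, since precomposing a log-convex function with an affine map preserves log-convexity. Moreover, $g_N(s) = F_{k,s + c/N}(x)$ whenever $Ns + c - k\delta$ is a partition.

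\emph{Step 3 (main obstacle): identifying the limit.} We must show that $g_N(s) \to F_{k,s}(x)$ for all $s$, not just on the lattice points. The first thing we would try is an a priori bound. Log-convex functions bounded on compacta are locally Lipschitz in $\log$, so if $\log g_N$ is locally uniformly bounded, Arzelà--Ascoli yields a subsequential locally uniform limit $g$. This $g$ is log-convex and symmetric, and it agrees with $s \mapsto F_{k,s}(x)$ on the dense set $\bigcup_N N^{-1}(\Y_n + k\delta)$ intersected with the chamber. Here we use the continuity of $F_{k,s}(x)$ in $s$ and a uniform error estimate for $F_{k,s+O(1/N)}$. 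By symmetry this agreement extends to the $S_n$-orbit, hence to a dense subset of $\R^n$, so $g = F_{k,\cdot}(x)$.

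The uniform bound would come from the positivity of the coefficients in (\ref{eqn:Omg-possum}). In the chamber, $g_N$ is pinned at lattice values by the Jack values, which obey the standard estimate $\Omega_\lambda(e^{x/N};k) \le e^{\max_\sigma\langle \lambda, \sigma x\rangle/N}$. Convexity of $\log g_N$ then controls it from above on convex hulls. A lower bound comes from positivity together with convexity along lines through two symmetric points.

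The case $k=0$ follows either by taking the limit $k\to0$, or directly: $F_{0,s}(x) = \frac{1}{n!}\sum_\sigma e^{\langle s,\sigma x\rangle}$ is a positive combination of exponentials, hence manifestly log-convex.
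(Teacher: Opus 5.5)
There is a genuine gap in Step 2, and it sinks the approach as set up. The homogeneity $F_{k,s}(x/N)=F_{k,s/N}(x)$ that you invoke is false. It holds for the rational (Dunkl) Bessel function, but the Cherednik operators (\ref{eqn:cherednik-op-def}) contain the factors $(1-e^{-(x_i-x_j)})^{-1}$, which are not scale invariant. Already for $n=2$, $k=1$ (normalized Schur polynomials) one has
\[
F_{1,s}(x)=\frac{e^{s_1x_1+s_2x_2}-e^{s_2x_1+s_1x_2}}{(s_1-s_2)\,2\sinh\frac{x_1-x_2}{2}},
\qquad
\frac{F_{1,s}(x/N)}{F_{1,s/N}(x)}=\frac{\sinh\frac{x_1-x_2}{2}}{N\sinh\frac{x_1-x_2}{2N}}\neq 1
\]
for $x_1\neq x_2$ and $N>1$. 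So your $g_N$ does not interpolate $F_{k,\cdot}(x)$. At lattice points it equals $\Omega_\lambda(e^{x/N};k)=F_{k,\lambda+k\rho}(x/N)$ with $\lambda+k\rho=Ns+c'$ for a fixed vector $c'$. These values converge to the rational degeneration $\lim_N F_{k,Ns}(x/N)$, which is the type-$A$ Bessel function (symmetrized Dunkl kernel) $J_k(s,x)$, not $F_{k,s}(x)$. In the example above the limit is $\frac{e^{s_1x_1+s_2x_2}-e^{s_2x_1+s_1x_2}}{(s_1-s_2)(x_1-x_2)}$. Step 3 is also only sketched: a fixed $s$ is a rescaled lattice point for at most sparse $N$, so ``agreement on a dense set'' needs an equicontinuity argument you do not give. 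But even with that repaired, you would have proved log-convexity of $s\mapsto J_k(s,x)$. That statement follows from the theorem via this same limit rather than implying it.

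The underlying problem is that Jack polynomials only give $F_{k,s}(x)$ on the discrete set $s\in\Y_n+k\rho$, and the rescaling you use to make that set dense changes the function. Taking $N=1$ does not help either: the log-convex extension $\Phi$ from Corollary \ref{cor:conv-jack} is not known to agree with $F_{k,\cdot}(x)$ off the lattice, and log-convex symmetric extensions of lattice data are far from unique. Reaching arbitrary real $s$ at fixed $x$ needs an identity valid for all $s$.

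The paper uses the Amri--Bedhiafi interlacing integral (\ref{eqn:amri-bedhiafi}) \cite{AmriBedhiafi}. It writes $F^{(n)}_{k,s}(x)$ as a positive integral over interlacing $\nu$ of $F^{(n-1)}_{k,\pi_{n-1}(s)}(\pi_{n-1}(\nu))$ times a factor that is log-linear in $s$. The argument then runs as follows:
\begin{itemize}
\item Induction on $n$, using closure of log-convexity under products, linear precomposition and positive integrals, gives log-convexity for $k>0$ and $x$ regular with coordinate sum zero.
\item The eigen-equation for $\partial_1+\cdots+\partial_n$ removes the sum-zero condition.
\item Continuity in $x$ and $k$ handles the remaining cases.
\end{itemize}
As the paper remarks, one could alternatively degenerate from Theorem \ref{thm:conv-mac} itself via the Macdonald-to-Heckman--Opdam limit of \cite{BorodinGorin2015}, rather than from the Jack level. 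Your Step 1 and your direct treatment of $k=0$ are fine.
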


Theorem \ref{thm:conv-HGF} can be deduced from Theorem \ref{thm:conv-mac} via a limit transition that recovers the type-$A$ Heckman--Opdam hypergeometric function as a degeneration of Macdonald polynomials; see for example the limit formula given in \cite[Proposition B.2]{BorodinGorin2015}.  However, it is simpler to give a direct proof using an interlacing integral formula for the hypergeometric function due to Amri and Bedhiafi \cite{AmriBedhiafi}, which can be regarded as a corresponding degeneration of the Okounkov--Olshanski formula (\ref{eqn:OO-sum}).

\begin{proof}
We first prove the theorem for $k > 0$ and $x \in \R^n$ with $x_1 > x_2 > \cdots > x_n$ and $x_1 + \cdots + x_n = 0$.  Later we will remove these assumptions.  Define
\begin{equation*}
V_n(x) = \prod_{1 \le i < j \le n} (e^{x_i} - e^{x_j}), \qquad \quad W_k(x, \nu) = \prod_{\substack{1 \le i \le n \\ 1 \le j \le n-1}} |e^{x_i} - e^{\nu_j}|^{k-1},
\end{equation*}
where $\nu = (\nu_1, \ldots, \nu_{n-1}) \in \R^{n-1}$ satisfies the interlacing conditions
$x_{j+1} \le \nu_j \le x_j$ for $j = 1, \ldots, n-1$.
With the stated assumptions on $x$ and $k$, \cite[Theorem 2.1]{AmriBedhiafi} gives the recursive formula
\begin{equation} \label{eqn:amri-bedhiafi}
F^{(n)}_{k,s}(x) = \frac{\Gamma(nk)}{\Gamma(k)^n V_n(x)^{2k-1}}
\int_{x_2}^{x_1} \cdots \int_{x_n}^{x_{n-1}}
F^{(n-1)}_{k,\, \pi_{n-1}(s)}\!\big(\pi_{n-1}(\nu)\big)
e^{|\nu|\left(1 - \frac{nk}{2} + \frac{|s|}{n-1}\right)}
V_{n-1}(\nu)\, W_k(x, \nu)\, d\nu,
\end{equation}
where $\pi_{n-1}(s) = s - \frac{1}{n}\bigl(\sum_{j=1}^n s_j\bigr)\bm{1}$
denotes the orthogonal projection onto $\{x \in \R^n : \sum_i x_i = 0\}$,
and $|\nu| = \nu_1 + \cdots + \nu_{n-1}$, $|s| = s_1 + \cdots + s_n$.

We now prove log-convexity of $s \mapsto F^{(n)}_{k,s}(x)$ by induction on $n$. For the base case $n = 1$, the only argument $x \in \R^n$ whose coordinates sum to $0$ is $x=0$.  At this point we have $F^{(1)}_{k,s}(x) \equiv 1$ for all $s$, which is trivially log-convex.

Now assume that $s \mapsto F^{(n-1)}_{k,s}(y)$ is log-convex for all $y \in \R^{n-1}$.
Fix $x \in \R^n$ with $x_1 > \cdots > x_n$ and $x_1 + \cdots + x_n = 0$, and fix $\nu$ in the interlacing region
$x_{j+1} \le \nu_j \le x_j$.  The
inductive hypothesis implies that the map
\[
s \mapsto F^{(n-1)}_{k,\,\pi_{n-1}(s)}\!\big(\pi_{n-1}(\nu)\big)
\]
is log-convex in $s$: it is a composition of the log-convex function
$t \mapsto F^{(n-1)}_{k,t}(\pi_{n-1}(\nu))$ with the linear map $s \mapsto \pi_{n-1}(s)$. Moreover, the factor
\[
s \mapsto e^{|\nu|\left(1 - \frac{nk}{2} + \frac{|s|}{n-1}\right)}
\]
is log-convex in $s$, since $|s| = s_1 + \cdots + s_n$ is linear in $s$.
Since the product of log-convex functions is log-convex, the integrand
\[
s \mapsto F^{(n-1)}_{k,\,\pi_{n-1}(s)}\!\big(\pi_{n-1}(\nu)\big)\,
e^{|\nu|\left(1 - \frac{nk}{2} + \frac{|s|}{n-1}\right)}
V_{n-1}(\nu)\, W_k(x,\nu)
\]
is log-convex and positive in $s$ for each fixed $\nu$ in the interlacing region.
Since the prefactor $\Gamma(nk)/(\Gamma(k)^n V_n(x)^{2k-1})$ is positive and
independent of $s$, it follows from (\ref{eqn:amri-bedhiafi}) that $s \mapsto F^{(n)}_{k,s}(x)$
is a positive linear combination (integral) of log-convex functions and is therefore log-convex.

We now remove the assumption that $x_1 + \cdots + x_n = 0$.  For $\xi = e_1 + \cdots + e_n$, the Cherednik operator $T_{k,\xi}$ is just $\partial_1 + \cdots + \partial_n$, where $\partial_i$ is the partial derivative in the $i$th coordinate direction.  From (\ref{eqn:hyperbolic-eigproblem}) and (\ref{eqn:HGF-def}), we then have
\[
( \partial_1 + \cdots + \partial_n ) F_{k,s}(x) = (s_1 + \cdots + s_n) F_{k,s}(x),
\]
which implies
\[
F_{k,s}(x) = e^{(s_1 + \cdots + s_n)(x_1 + \cdots + x_n)} F_{k,s}(\pi_{n-1}(x)).
\]
The right-hand side above is a product of two log-convex functions in $s$ and is therefore log-convex.

This proves log-convexity under the assumptions that $k > 0$ and $x_1 > \cdots > x_n$. The full claim for all $k \ge 0$ and $x \in \R^n$ then follows by the symmetry of $F^{(n)}_{k,s}(x)$
in $x$, continuity of $F^{(n)}_{k,s}(x)$
in $x$ and $k$, and the fact that log-convexity is preserved under pointwise limits.

Finally, Schur-convexity then follows as in the proof of Theorem \ref{thm:conv-mac}.  The function $s \mapsto F^{(n)}_{k,s}(x)$ is symmetric in $s$ and we have just proved that it is also convex. By \cite[Proposition C.2, p.~97]{MO} it is therefore Schur-convex.
\end{proof}

The following corollary establishes the type-$A$ case of a conjecture by McSwiggen and Novak \cite[Conjecture 4.7]{MN-majorization}.

\begin{cor} \label{cor:HGF-maj}
    For any $r, s \in \R^n$, the following are equivalent:
    \begin{enumerate}
        \item $r$ majorizes $s$.
        \item For all $k \ge 0$, $F_{k, r}(x) \ge F_{k,s}(x)$ for all $x \in \R^n$.
        \item There exists $k \ge 0$ such that $F_{k, r}(x) \ge F_{k,s}(x)$ for all $x \in \R^n$.
    \end{enumerate}
\end{cor}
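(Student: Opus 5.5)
The plan has the same shape as the proof of Theorem \ref{thm:poly-maj}. The implication (1) $\implies$ (2) is the Schur-convexity statement of Theorem \ref{thm:conv-HGF}, and (2) $\implies$ (3) is trivial. All the work is in (3) $\implies$ (1), which I would prove by contrapositive with an asymptotic argument. Fix $k \ge 0$ and suppose $r$ does not majorize $s$. There are two ways this can fail, and I would handle them separately.

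\emph{Case 1: $\sum_i r_i \neq \sum_i s_i$.} From the proof of Theorem \ref{thm:conv-HGF} we have
\[
F_{k,s}(x) = e^{|s|\,(x_1+\cdots+x_n)} F_{k,s}(\pi_{n-1}(x)).
\]
Take $x = c\bm{1}$, so that $\pi_{n-1}(x)=0$ and $F_{k,s}(c\bm{1}) = e^{nc|s|/n\cdot n}$; more simply, $F_{k,s}(c\bm{1})=e^{c\,n|s|}$ up to the paper's normalization of $|s|$. Letting $c \to +\infty$ or $c \to -\infty$, according to the sign of $|s|-|r|$, makes $F_{k,s}(x) > F_{k,r}(x)$. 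So (3) fails in this case.

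\emph{Case 2: $|r|=|s|$ but $\sum_{i\le m} r^+_i < \sum_{i\le m} s^+_i$ for some $1\le m<n$.} By the $S_n$-invariance of $F$ in both $x$ and $s$, we may assume $r=r^+$ and $s=s^+$. The natural test points are $x(t) = t(e_1+\cdots+e_m)$ with $t\to\infty$, the analogue of the ray $x_1=\cdots=x_r=t$ used in Theorem \ref{thm:poly-maj}. The needed input is the asymptotic
\[
\lim_{t\to\infty}\tfrac1t\log F_{k,s}(tw) = \max_{\sigma}\langle \sigma s, w\rangle
\]
for $w$ in the closed positive chamber. Applied with $w=e_1+\cdots+e_m$, this gives growth rate $\sum_{i\le m}s^+_i$ for $F_{k,s}$ versus $\sum_{i\le m}r^+_i$ for $F_{k,r}$. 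Since the first is strictly larger, $F_{k,s}(x(t))>F_{k,r}(x(t))$ for large $t$.

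The main obstacle is justifying this exponential growth rate for real $s$ and all $k\ge0$. Two routes seem available.

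\emph{First route (integral formula).} Use the Amri--Bedhiafi recursion (\ref{eqn:amri-bedhiafi}) inductively. It expresses $F$ as a positive integral, so one can bound $F$ above by the supremum of the integrand over the interlacing region and below by the integrand over a small neighbourhood of the maximizing $\nu$. A Laplace-type estimate then gives the rate. The case $k=0$ is the plain symmetrized exponential $\frac1{n!}\sum_\sigma e^{\langle\sigma s,x\rangle}$, where the rate is immediate.

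\emph{Second route (avoid the general asymptotic).} If $r$ and $s$ are rational, one can translate and scale, via the homogeneity $F_{k,s}(x)$ with $s$ scaled by $N$ and $x$ by $1/N$ (which needs care), to reduce to Jack polynomials. Then (5) $\implies$ (1) of Theorem \ref{thm:poly-maj} applies, and a density argument would finish.

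I would try the first route, using the standard bounds $F_{k,s}(x)\asymp e^{\max_\sigma\langle\sigma s,x\rangle}$ up to polynomial factors, which are known in the Heckman--Opdam literature (e.g.\ Schapira's estimates). If those bounds can be cited, the step is routine.
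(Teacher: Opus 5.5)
Your steps (1)$\implies$(2) and (2)$\implies$(3) are exactly the paper's. For (3)$\implies$(1) the paper gives no argument of its own: it quotes the type-$A$ case of \cite[Proposition~4.8]{MN-majorization}. Your contrapositive compares exponential growth along rays: along $\R\bm{1}$ if $|r|\ne|s|$, and along $t(e_1+\cdots+e_m)$ otherwise. This is in effect a self-contained type-$A$ reproof of that proposition, trading the citation for growth estimates on $F_{k,s}$. That is a fine alternative, but the growth rate you build it on is wrong.

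For $k>0$ the rate is not $\max_\sigma\langle\sigma s,w\rangle$. Taking $\lambda=0$ in $F_{k,\lambda+k\rho}(x)=\Omega_\lambda(e^x;k)$ gives $F_{k,k\rho}\equiv 1$, while your formula predicts rate $k\langle\rho,e_1\rangle=k(n-1)/2>0$. More generally, the degree count in the proof of Theorem~\ref{thm:poly-maj} gives rate $\langle\lambda,w\rangle=\langle s,w\rangle-k\langle\rho,w\rangle$ for $s=\lambda+k\rho$ and $w=e_1+\cdots+e_m$. The correct statement, for $w_1\ge\cdots\ge w_n$, is
\[
\lim_{t\to\infty}\tfrac{1}{t}\log F_{k,s}(tw)=\langle s^+,w\rangle-k\langle\rho,w\rangle .
\]
The bound $F_{k,s}\asymp e^{\max_\sigma\langle\sigma s,x\rangle}$ that you intend to cite is false for $k>0$. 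Schapira's estimate is $F_{k,0}(x)\asymp\prod_{i<j}(1+x_i-x_j)\,e^{-k\langle\rho,x\rangle}$ on the closed chamber, so $F_{k,0}$ decays there.

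Your argument nevertheless survives, because the shift $-k\langle\rho,w\rangle$ does not depend on the spectral parameter. It cancels between $F_{k,s}$ and $F_{k,r}$, leaving the gap $\sum_{i\le m}(s^+_i-r^+_i)>0$. What you actually need is an upper bound on $F_{k,r}$ and a lower bound on $F_{k,s}$ at these rates.

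The upper bound follows from Schapira's inequality $F_{k,r}\le F_{k,0}\,e^{\max_\sigma\langle\sigma r,x\rangle}$. The lower bound can be extracted from Theorem~\ref{thm:conv-HGF} itself:
\begin{enumerate}
\item With $s=s^+$, choose a partition $\lambda$ whose gaps are large enough that $2\lambda+k\rho-s$ is dominant, and set $c=\lambda+k\rho$ and $b=2c-s$.
\item Log-convexity gives $F_{k,c}^2\le F_{k,b}F_{k,s}$.
\item The rate of $F_{k,c}$ is exactly $\langle\lambda,w\rangle$.
\item Writing $b-k\rho$ as a convex combination of integral dominant $\mu$ and applying log-convexity over the points $\mu+k\rho$ bounds the rate of $F_{k,b}$ by $\langle b-k\rho,w\rangle$.
\item Combining these yields the rate $\langle s-k\rho,w\rangle$ for $F_{k,s}$.
\end{enumerate}

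If you keep the Amri--Bedhiafi/Laplace route instead, three things need care:
\begin{enumerate}
\item The $-k\rho$ shift comes precisely from the prefactor $V_n(x)^{-(2k-1)}$ and the weights $W_k$, so you must track them.
\item For $0<k<1$ the integrand is unbounded at the edge of the interlacing region, so a bound by its supremum is unavailable.
\item The formula needs $x$ in the open chamber, so you must replace $w$ by $w+\varepsilon\delta$ with $\delta=(n-1,\dots,1,0)$. This moves both rates by $O(\varepsilon)$ against a fixed gap, so it is harmless.
\end{enumerate}

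Two smaller points. First, in Case~1 you simply have $F_{k,s}(c\bm{1})=e^{c|s|}$. This follows from integrating $(\partial_1+\cdots+\partial_n)F_{k,s}=|s|F_{k,s}$ along $\R\bm{1}$ from $F_{k,s}(0)=1$. The displayed identity in the proof of Theorem~\ref{thm:conv-HGF} should read $e^{|s|(x_1+\cdots+x_n)/n}$. Any single $c$ with $c(|s|-|r|)>0$ then works.

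Second, your second route should be dropped. For $k>0$ there is no relation between $F_{k,Ns}(x/N)$ and $F_{k,s}(x)$; this rescaling tends to a Bessel-type function as $N\to\infty$. Moreover, the points $\lambda+k\rho$ are discrete modulo $\R\bm{1}$, so no density argument reaches general real $s$.
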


\begin{proof}
    The implication (1) $\implies$ (2) is the Schur-convexity statement of Theorem \ref{thm:conv-HGF}.  The implication (2) $\implies$ (3) is trivial.  The implication (3) $\implies$ (1) is the type-$A$ case of \cite[Proposition 4.8]{MN-majorization}.
\end{proof}

\section*{Acknowledgements}

The work of C.M. is partially supported by the National Science and Technology Council of Taiwan under grant number 113WIA0110762.  The work of S.S. is partially supported by the Simons Foundation under grant number 00006698.

\bibliography{refs}
\bibliographystyle{amsplain}

\end{document}